\newtheorem{theorem}{Theorem}[section]
\newtheorem{definition}[theorem]{Definition}
\newtheorem{lemma}[theorem]{Lemma}
\newtheorem{proposition}[theorem]{Proposition}
\newtheorem{corollary}[theorem]{Corollary}
\newtheorem{remark}[theorem]{Remark}
\newtheorem{example}[theorem]{Example}
\newtheorem{examples}[theorem]{Examples}
\newcommand{\oo}{{\mathbb{O}}}
\newcommand{\hh}{{\mathbb{H}}}
\newcommand{\cc}{{\mathbb{C}}}
\newcommand{\rr}{{\mathbb{R}}}
\newcommand{\zz}{{\mathbb{Z}}}
\newcommand{\nn}{{\mathbb{N}}}
\newcommand{\s}{{\mathbb{S}}}
\newcommand{\sr}{\mathcal{SR}}
\newcommand{\I}{\mathcal{I}}
\renewcommand{\P}{\mathcal{P}}
\newcommand{\B}{\mathcal{B}}
\newcommand{\T}{\mathcal{T}}
\newcommand{\h}{{\bf h}}
\renewcommand{\k}{{\bf k}}
\newcommand{\F}{\mathscr{F}}
\newcommand{\debar}{\overline{\partial}}
\newcommand{\Span}{\operatorname{Span}}
\newcommand{\torus}{\mathbb{T}}
\newcommand{\mon}{\mathrm{Mon}}
\newcommand{\reg}{\mathrm{Reg}}
\newcommand{\slice}{\mathcal{S}}
\newcommand{\mr}{\mathrm}
\newcommand{\mscr}{\mathscr}
\newcommand{\hslashslash}{%
  \raisebox{.9ex}{%
    \scalebox{.7}{%
      \rotatebox[origin=c]{18}{$-$}%
    }%
  }%
}
\newcommand{\fslash}{%
  {%
   \vphantom{f}%
   \ooalign{\kern.05em\smash{\hslashslash}\hidewidth\cr$f$\cr}%
   \kern.05em
  }%
}
\title{\bf A unified notion of regularity in one hypercomplex variable}
\author{Riccardo Ghiloni\thanks{Partly supported by: GNSAGA INdAM; Progetto ``Teoria delle funzioni ipercomplesse e applicazioni'' Universit\`a di Firenze; PRIN 2017 ``Moduli theory and birational classification'' MIUR.}\\
\small Dipartimento di Matematica, Universit\`a di Trento\\ 
\small Via Sommarive 14, I-38123 Povo Trento, Italy\\
\small riccardo.ghiloni@unitn.it\\
\and
Caterina Stoppato\thanks{Partly supported by: GNSAGA INdAM; Progetto ``Teoria delle funzioni ipercomplesse e applicazioni'' Universit\`a di Firenze; PRIN 2022 ``Real and complex manifolds: geometry and holomorphic dynamics'' MIUR and by Finanziamento Premiale FOE 2014 ``Splines for accUrate NumeRics: adaptIve models for Simulation Environments'' INdAM.}
\\ 
\small Dipartimento di Matematica e Informatica ``U. Dini'', Universit\`a di Firenze \\
\small Viale Morgagni 67/A, I-50134 Firenze, Italy\\
\small caterina.stoppato@unifi.it}
\date{  }
\begin{document}

\maketitle


\begin{abstract}
We define a very general notion of regularity for functions taking values in an alternative real $*$-algebra. Over Clifford numbers, this notion subsumes the well-established notions of monogenic function and slice-monogenic function. Over quaternions, in addition to subsuming the notions of Fueter-regular function and of slice-regular function, it gives rise to an entirely new theory, which we develop in some detail.
\end{abstract}


\section{Introduction}\label{sec:introduction}

The aim of this work is two-fold. On the one hand, it aims at defining a unified notion of regularity for functions taking values in an alternative real $*$-algebra $A$, general enough to subsume some of the best-known notions of regularity in one hypercomplex variable. On the other hand, it aims at finding, within the scope of this new notion, some interesting new function theory.

The first aim is addressed in Section~\ref{sec:hypercomplexsubspaces}, which sets up appropriate domains $\Omega$ within the $*$-algebra $A$, and in Section~\ref{sec:hypercomplexregularity}, which defines \emph{$T$-regular functions} $f:\Omega\to A$. When $A$ is the real Clifford algebra $C\ell(0,n)$, $T$-regularity subsumes both the notions of monogenic function (see, e.g.,~\cite{librosommen,librocnops,librogurlebeck2}) and of slice-monogenic function (\cite{israel,librodaniele2}). When $A$ is the real algebra of quaternions $\hh$, $T$-regularity not only subsumes the theories of Fueter (see~\cite{fueter1,fueter2,sudbery}), of Gentili-Struppa (see~\cite{cras,advances} and~\cite{librospringer2}) and of Moisil-Teodorescu (see~\cite{moisilteodorescu}), but also includes the brand new theory of \emph{$(1,3)$-regular} quaternionic functions. When $A$ is the real algebra of octonions $\oo$, the concept of $T$-regular function subsumes the notions of octonionic monogenic function (based on~\cite{sce} and considered in the recent work~\cite{krausshardifferentialtopological}) and of slice-regular function (see~\cite{rocky}).

Section~\ref{sec:(1,3)-regularity} studies in some detail the new theory of $(1,3)$-regular quaternionic functions, which turns out to be very interesting and motivates a forthcoming work on general $T$-regular functions. 


\section{Hypercomplex subspaces}\label{sec:hypercomplexsubspaces}

Let $(A,+,\cdot,^c)$ be a real $*$-algebra of finite dimension, i.e., a finite-dimensional $\rr$-vector space endowed with an $\rr$-bilinear multiplicative operation and with an involutive $\rr$-linear antiautomorphism $x\mapsto x^c$ (called $*$-involution). Assume $A$ to be alternative, i.e., assume $x(xy)=x^2y,(xy)y=xy^2$ for all $x,y\in A$, which is automatically true if $A$ is associative. Set $t(x):=x+x^c$ and $n(x):=xx^c$ for all $x\in A$. The \emph{quadratic cone} was defined in~\cite{perotti} by means of the equality $Q_A:=\rr\cup\{x\in A\setminus\rr:t(x)\in\rr,n(x)\in\rr,4n(x)>t(x)^2\}$ and has the property
\[Q_A=\bigcup_{J\in\s_A}\cc_J\,,\]
where $\s_A=\{x\in A: t(x)=0,n(x)=1\}$ and where $\cc_J:=\rr+J\rr$ for all $J\in\s_A$ is $*$-isomorphic to $\cc$. In particular, if $x=\alpha+\beta J\in Q_A$ (with $\alpha,\beta\in\rr,J\in\s_A$) then: the conjugate $x^c=\alpha-\beta J$ belongs to $Q_A$; $t(x)=2\alpha\in\rr$; $n(x)=n(x^c)=\alpha^2+\beta^2\in\rr$; provided $x\neq0$, the element $x$ has a multiplicative inverse, namely $x^{-1}=n(x)^{-1}x^c=x^cn(x)^{-1}$, which still belongs to $Q_A$. We assume $\s_A\neq\emptyset$, whence $\rr\subsetneq Q_A$. Some examples follow, see~\cite{ebbinghaus,librogurlebeck2} for full details.

\begin{examples}
The division algebras $\cc=\rr+i\rr$ of complex numbers, $\hh=\rr+i\rr+j\rr+k\rr$ of real quaternions and $\oo=\rr+i\rr+j\rr+k\rr+l\rr+li\rr+lj\rr+lk\rr$ of real octonions are alternative real $*$-algebras of dimensions $2,4,8$, respectively. The equalities $Q_\cc=\cc,Q_\hh=\hh,Q_\oo=\oo$ hold, while $\s_\cc,\s_\hh,\s_\oo$ are, respectively, the $0,2,6$-dimensional unit spheres in the respective subspaces $t(x)=0$.
\end{examples}

For any $n\in\nn^*$, let $\mscr{P}(n)$ denote the power set of $\{1,\ldots,n\}$.

\begin{examples}
For each $n\in\nn^*$, the Clifford algebra $C\ell(0,n)$, whose elements have the form $\sum_{K\in\mscr{P}(n)}x_Ke_K$ with $x_K\in\rr$ for all $K\in\mscr{P}(n)$, is an associative real $*$-algebra of dimension $2^n$. The sets $\s_{C\ell(0,n)}$ and $Q_{C\ell(0,n)}$ are nested proper real algebraic subsets of $C\ell(0,n)$.
\end{examples}

We now focus on specific subsets of the quadratic cone $Q_A$.

\begin{definition}
Let $M$ be a real vector subspace of $A$. An ordered real vector basis $(v_0,v_1,\ldots,v_m)$ of $M$ is called a \emph{hypercomplex basis} of $M$ if: $m\geq1$; $v_0=1$; $v_s\in\s_A$ and $v_sv_t=-v_tv_s$ for all distinct $s,t\in\{1,\ldots,m\}$. The subspace $M$ is called a \emph{hypercomplex subspace} of $A$ if $\rr\subsetneq M\subseteq Q_A$.
\end{definition}

A basis $(v_0,v_1,\ldots,v_m)$ is a hypercomplex basis if, and only if, $t(v_s)=0,n(v_s)=1$ and $t(v_sv_t^c)=0$ for all distinct $s,t\in\{1,\ldots,m\}$. We point out that, for any $\ell\in\{1,\ldots,m\}$, the shortened ordered set $(v_0,v_1,\ldots,v_\ell)$ is a hypercomplex basis of its span. The concept of hypercomplex subspace was defined in~\cite[\S3]{perotticr}, which also proved the first statement in the next theorem (cf.~\cite[Lemma 1.4]{volumeintegral}).

\begin{theorem}\label{thm:hypercomplexbasis}
Every hypercomplex subspace of $A$ admits a hypercomplex basis.

Conversely, if a real vector subspace $M$ of $A$ has a hypercomplex basis $\B=(v_0,v_1,\ldots,v_m)$, then $M$ is a hypercomplex subspace of $A$. Moreover, if we complete $\B$ to a real vector basis $\B'=(v_0,v_1,\ldots,v_m,v_{m+1},\ldots,v_d)$ of $A$ and if we endow $A$ with the standard Euclidean scalar product $\langle\cdot,\cdot\rangle$ and norm $\Vert\cdot\Vert$ associated to $\B'$, then $t(xy^c)=t(yx^c)=2\langle x,y\rangle$ and $n(x)=n(x^c)=\Vert x\Vert^2$ for all $x,y\in M$.
\end{theorem}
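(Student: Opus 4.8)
The plan is to take the direct statement as already supplied by \cite{perotticr} and to concentrate all effort on the converse together with the ``moreover'' clause. Throughout I would lean on the characterization recorded just before the theorem: a hypercomplex basis satisfies $v_s^c=-v_s$ and $v_s^2=-1$ for $s\geq1$ (the first from $t(v_s)=0$, the second from $n(v_s)=v_sv_s^c=-v_s^2=1$), together with the anticommutation $v_sv_t=-v_tv_s$ for distinct $s,t\in\{1,\ldots,m\}$. The structural observation that organizes everything is that each quantity in the statement is \emph{quadratic}: expanding $n(x)=xx^c$, the square of an imaginary part, or $t(xy^c)$ by $\rr$-bilinearity produces only products of \emph{two} basis vectors, so associativity (hence alternativity) is never invoked. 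The only algebraic inputs are distributivity, the centrality of real scalars, and the antiautomorphism law $(ab)^c=b^ca^c$ for the $*$-involution.

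For the converse I would fix an arbitrary $x=\sum_{s=0}^m x_sv_s\in M$ and split it as $x=x_0+y$ with $y=\sum_{s=1}^m x_sv_s$. Since $v_0^c=1$ and $v_s^c=-v_s$, $\rr$-linearity of the involution gives $x^c=x_0-y$, hence $t(x)=2x_0\in\rr$. Expanding $y^2=\sum_{s,t}x_sx_t\,v_sv_t$, the off-diagonal terms cancel in conjugate pairs by anticommutation while the diagonal contributes $v_s^2=-1$, so $y^2=-\sum_{s=1}^m x_s^2$ and therefore $n(x)=x_0^2-y^2=\sum_{s=0}^m x_s^2\in\rr$. Consequently $4n(x)-t(x)^2=4\sum_{s=1}^m x_s^2\geq0$, with strict inequality precisely when $x\notin\rr$, placing $x$ in $Q_A$. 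Since $1=v_0\in M$ while $v_1\in M$ has $t(v_1)=0$ and $n(v_1)=1$ and so $v_1\notin\rr$ (as $v_1\neq0$), we also get $\rr\subsetneq M$; thus $M$ is a hypercomplex subspace.

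For the ``moreover'' clause I would first isolate the single clean identity $t(v_sv_t^c)=2\delta_{st}$ valid for all $s,t\in\{0,1,\ldots,m\}$, proved by a short case analysis: the diagonal cases reduce to $t(1)=2$ (using $v_sv_s^c=n(v_s)=1$ and $1^c=1$), the mixed cases involving the index $0$ reduce to $t(\pm v_t)=0$, and the purely off-diagonal cases are exactly the characterization hypothesis $t(v_sv_t^c)=0$. With $\B'$ declared orthonormal, so that $\langle x,y\rangle=\sum_i x_iy_i$, bilinearity of $t$ and of the product then yields $t(xy^c)=\sum_{s,t}x_sy_t\,t(v_sv_t^c)=2\sum_s x_sy_s=2\langle x,y\rangle$ for $x,y\in M$, and symmetry of $\langle\cdot,\cdot\rangle$ gives $t(yx^c)=2\langle x,y\rangle$ too. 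Specializing to $y=x$ recovers $2n(x)=t(xx^c)=2\Vert x\Vert^2$ (using $n(x)\in\rr$ so that $t(n(x))=2n(x)$), and $n(x^c)=n(x)$ follows because $x^c=x_0-y$ has the same coordinate square-sum as $x$.

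I do not expect a genuine obstacle here: once the reduction to two-fold products is noticed, the argument is entirely bilinear bookkeeping and sidesteps every non-associativity concern. The only points that demand care are the cancellation of the off-diagonal terms in $y^2$, which must be set up as a pairing of $(s,t)$ with $(t,s)$ since the individual products $v_sv_t$ with $s\neq t$ do not vanish but only their symmetrizations do, and the uniform treatment of the index $0$ in $t(v_sv_t^c)=2\delta_{st}$, where one should verify $1^c=1$ (forced by $c$ being a bijective antiautomorphism) rather than assume it.
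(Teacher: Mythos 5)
Your proposal is correct and follows essentially the same route as the paper's proof: a bilinear expansion of $t(xy^c)$ and $n(x)$ in the hypercomplex basis, using $n(v_s)=1$ and the orthogonality relations $t(v_s)=0=t(v_sv_t^c)$, followed by the observation that $4n(x)-t(x)^2=4\sum_{s\geq1}x_s^2>0$ off the real axis. The only cosmetic differences are your split $x=x_0+y$ with the direct computation of $y^2$ via anticommutation (the paper instead specializes a general expansion of $n(x)$ and $t(xy^c)$ over the completed basis $\B'$) and your packaging of the orthonormality as $t(v_sv_t^c)=2\delta_{st}$.
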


\begin{proof}
The first statement was proven in~\cite[\S3]{perotticr}. Let $(v_0,v_1,\ldots,v_m,v_{m+1},\ldots,v_d)$ be a real vector basis of $A$. If $x=\sum_{s=0}^dx_sv_s$ and $y=\sum_{t=0}^dy_tv_t$, then
\begin{align*}
t(xy^c)&=\sum_{s=0}^dx_sy_st(n(v_s))+\sum_{s=1}^d(x_0y_s+y_0x_s)t(v_s)+\sum_{1\leq s<t\leq d}(x_sy_t+x_ty_s)t(v_sv_t^c)\,,\\
n(x)&=\sum_{s=0}^dx_s^2n(v_s)+\sum_{s=1}^dx_0x_st(v_s)+\sum_{1\leq s<t\leq d}x_sx_tt(v_sv_t^c)\,.
\end{align*}
If $(v_0,v_1,\ldots,v_m)$ is a hypercomplex basis of $M$, then $n(v_s)=1,t(n(v_s))=2$ and $t(v_s)=0=t(v_sv_t^c)$ for all distinct $s,t\in\{1,\ldots,m\}$. If $x,y\in M$ (whence $x_s=0=y_t$ for all $s,t\in\{m+1,\ldots,d\}$), we find that $t(xy^c)=2\sum_{s=0}^mx_sy_s=2\langle x,y\rangle$ and that $n(x)=\sum_{s=0}^mx_s^2=\Vert x\Vert^2$, i.e., the last statement in the theorem. We are left with proving the second statement: namely, that $M=\Span(v_0,v_1,\ldots,v_m)$ is a hypercomplex subspace of $A$. Since $v_0=1,v_1\not\in\rr$, the proper inclusion $\rr\subsetneq M$ immediately follows. The inclusion $M\subseteq Q_A$ can be proven as follows. If $x\in M\setminus\rr$, then $4n(x)=4\sum_{s=0}^mx_s^2>4x_0^2=(2x_0)^2=(x+x^c)^2=t(x)^2$, whence $x\in Q_A$.
\end{proof}

\begin{example}\label{ex:paravectors}
The subspace of paravectors $\rr^{n+1}$ is a hypercomplex subspace of the Clifford algebra $C\ell(0,n)$, having $\B=(e_\emptyset,e_1,\ldots,e_n)$ as a hypercomplex basis. If we complete $\B$ to the standard basis $\B'=(e_K)_{K\in\mscr{P}(n)}$ of $C\ell(0,n)$ and consider the standard scalar product and norm on $C\ell(0,n)$, then the equalities appearing in Theorem~\ref{thm:hypercomplexbasis} are well-known properties of paravectors.
\end{example}

\begin{examples}[{\cite[Example 1.15]{gpsalgebra}}]\label{ex:svectors}
For every $h\in\{1,\ldots,n\}$ with $h\equiv1\,\mr{mod}\,4$,
\[V_h:=\left\{x_0+\sum_{1\leq s_1<\ldots<s_h\leq n}x_{s_1\ldots s_h}e_{s_1\ldots s_h} : x_0, x_{s_1\ldots s_h}\in\rr\right\}\]
is a hypercomplex subspace of $C\ell(0,n)$, with $\B=(e_{s_1\ldots s_h})_{1\leq s_1<\ldots<s_h\leq n}$ as a hypercomplex basis and $\dim V_h=\binom{n}{h}+1\geq\left(\frac{n}{h}\right)^h+1$. If we set $h(n):=4\lfloor \frac{n+2}8\rfloor+1$ (whence $\frac{n}2-2<h(n)\leq\frac{n}2+2$), then $\dim V_{h(n)}$ grows exponentially with $n$.
\end{examples}

More associative examples can be constructed by means of the next lemma.

\begin{lemma}
Assume $A$ is associative and $M$ is a real vector subspace of $A$ with a hypercomplex basis $\B=(v_0,v_1,\ldots,v_m)$. Set $\widehat v:=v_1\cdots v_m$. Then $\widehat\B:=(v_0,v_1,\ldots,v_m,\widehat v)$ is a hypercomplex basis of $\widehat M:=\Span(\widehat\B)$ if, and only if, $m\equiv2\,\mr{mod}\,4$. If so, not only $M$ but also $\widehat M$ is a hypercomplex subspace of $A$.
\end{lemma}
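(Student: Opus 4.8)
The plan is to check the three conditions of the characterization recalled just after the Definition: the ordered tuple $\widehat\B$ is a hypercomplex basis exactly when each non-real generator $v$ satisfies $t(v)=0$ and $n(v)=1$, and each pair of distinct generators $v,w$ satisfies $t(vw^c)=0$. Since these hold among $v_0,\ldots,v_m$ by hypothesis, everything reduces to the new generator $\widehat v$: I must compute $t(\widehat v)$, $n(\widehat v)$ and $t(\widehat v v_s^c)$ for $s\in\{1,\ldots,m\}$, and—for the basis property—check that $\widehat v\notin M$. Throughout I will use that $v_s\in\s_A$ forces $v_s^c=-v_s$ and hence $v_s^2=-v_sv_s^c=-n(v_s)=-1$, and that $\widehat v$, as a product of invertible elements of the associative algebra $A$, is itself invertible and so nonzero.

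Everything then rests on two sign computations. Applying the $*$-antiautomorphism reverses the $m$ factors, and restoring their order costs $\binom m2=\tfrac{m(m-1)}2$ anticommutations, so I expect $\widehat v^c=(-1)^m(-1)^{m(m-1)/2}\widehat v=(-1)^{m(m+1)/2}\widehat v$; likewise, sliding $v_s$ through the product from either end and using $v_s^2=-1$ should give the commutation rule $\widehat v v_s=(-1)^{m+1}v_s\widehat v$. From the first identity, $n(\widehat v)=\widehat v\widehat v^c=1$ for every $m$ (the squares telescope to $(-1)^{2m}$), whereas $t(\widehat v)=\big(1+(-1)^{m(m+1)/2}\big)\widehat v$ vanishes iff $\tfrac{m(m+1)}2$ is odd, i.e. iff $m\equiv1,2\,\mr{mod}\,4$. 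Combining both identities gives $t(\widehat v v_s^c)=\big((-1)^{m(m+1)/2}+(-1)^m\big)v_s\widehat v$, which vanishes iff $m\equiv2,3\,\mr{mod}\,4$ (equivalently, $\widehat v$ anticommutes with every $v_s$ precisely when $m$ is even). As $n(\widehat v)=1$ always, the two remaining conditions hold simultaneously iff $m\equiv2\,\mr{mod}\,4$, proving both directions of the equivalence.

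For the basis property when $m\equiv2\,\mr{mod}\,4$, I would argue that if $\widehat v\in M$ then Theorem~\ref{thm:hypercomplexbasis} applies to the pairs $\widehat v,v_s\in M$ and yields $\langle\widehat v,v_s\rangle=\tfrac12 t(\widehat v v_s^c)=0$ for every $s\in\{0,\ldots,m\}$ (the case $s=0$ being $\tfrac12 t(\widehat v)=0$); since the $v_s$ span $M\ni\widehat v$, bilinearity forces $\langle\widehat v,\widehat v\rangle=0$, whereas the same theorem gives $\langle\widehat v,\widehat v\rangle=n(\widehat v)=1$, a contradiction. Hence $\widehat v\notin M$ and $\widehat\B$ is a genuine basis of $\widehat M$. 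Having exhibited a hypercomplex basis of $\widehat M$, the converse part of Theorem~\ref{thm:hypercomplexbasis} shows that $\widehat M$ is a hypercomplex subspace of $A$, and the same theorem applied to $\B$ shows that $M$ is one as well. I expect the only delicate point to be the sign bookkeeping in the exponents $\tfrac{m(m+1)}2$ and $m+1$; once these are pinned down, the residue count modulo $4$ and the orthogonality argument are routine.
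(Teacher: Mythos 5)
Your proof is correct and follows essentially the same route as the paper: the same sign computations for $\widehat v^c$, $n(\widehat v)$ and the commutation of $\widehat v$ with the $v_\ell$, leading to the same residue count modulo $4$. The only cosmetic difference is in the linear-independence step, which you settle via orthogonality with respect to the scalar product of Theorem~\ref{thm:hypercomplexbasis} while the paper uses the equivalent fact that $n\big(\sum_s x_sv_s+\widehat x\widehat v\big)=\sum_s x_s^2+\widehat x^2$ is positive definite; both rest on the same computation.
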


\begin{proof}
Using the hypothesis that $\B$ is a hypercomplex basis, a direct computation shows that $\widehat v^c=-\widehat v$ for $m\equiv1,2\,\mr{mod}\,4$ and $\widehat v^c=\widehat v$ for $m\equiv0,3\,\mr{mod}\,4$; that $n(\widehat v)=1$; and that $v_\ell\widehat v=(-1)^{m-1}\widehat vv_\ell$ for all $\ell\in\{1,\ldots,m\}$. Overall, the desired equalities $t(\widehat v)=0,n(\widehat v)=1$ and $t(v_\ell\widehat v^c)=0$ for all $\ell\in\{1,\ldots,m\}$ hold true if, and only if, $m\equiv2\,\mr{mod}\,4$. If this is the case then, by the first part of the proof of Theorem~\ref{thm:hypercomplexbasis}, any $x=\sum_{s=0}^nx_sv_s+\widehat x\widehat v\in\widehat M$ (with $x_0,\ldots,x_n,\widehat x\in\rr$) has $n(x)=\sum_{s=0}^nx_s^2+\widehat x^2$. If $x=0$, then $n(x)=0$ and $x_0=\ldots=x_n=\widehat x=0$. We conclude that $v_0,v_1,\ldots,v_m,\widehat v$ are linearly independent, as desired.
\end{proof}

\begin{example}\label{ex:quaternions}
Examples of hypercomplex subspaces of the real algebra of quaternions $\hh=C\ell(0,2)$ are not only the subspace of paravectors, or \emph{reduced quaternions}, $\rr^{2+1}$ with hypercomplex basis $(e_\emptyset,e_1,e_2)$, but also the whole algebra $\hh$ with hypercomplex basis $(e_\emptyset,e_1,e_2,e_{12})=(1,i,j,k)$. Unless otherwise stated, we shall assume $\B=\B'=(1,i,j,k)$ and endow $\hh$ with standard scalar product and norm.
\end{example}

\begin{example}
An example of hypercomplex subspace of $C\ell(0,n)$, distinct from those considered in Examples~\ref{ex:paravectors} and~\ref{ex:svectors}, is $\Span(e_\emptyset,e_1,e_2,\ldots,e_m,e_{12\ldots m})$, for any $m\leq n$ with $m\equiv2\,\mr{mod}\,4$. 
\end{example}

We also have a nonassociative example.

\begin{example}
$\oo$ is a hypercomplex subspace of $\oo$. We shall set $\B=\B'=(1,i,j,k,l,li,lj,lk)$ and endow $\oo$ with its standard scalar product and norm.
\end{example}


\section{Regularity in hypercomplex subspaces}\label{sec:hypercomplexregularity}

In this section, we work on a fixed hypercomplex subspace of our alternative real $*$-algebra $A$, having a hypercomplex basis $\B=(v_0,v_1,\ldots,v_n)$ for some $n\in\nn^*$. After completing $\B$ to a real vector basis $\B'=(v_0,v_1,\ldots,v_n,v_{n+1},\ldots,v_d)$ of $A$, we endow $A$ with the standard Euclidean scalar product $\langle\cdot,\cdot\rangle$ and norm $\Vert\cdot\Vert$ associated to $\B'$. We denote our fixed hypercomplex subspace simply by $\rr^{n+1}$, thinking of the paravector subspace of $C\ell(0,n)$, Example~\ref{ex:paravectors}, as a guiding example for this section. We need a few further notations: for $0\leq\ell<m\leq n$, we consider the $(m-\ell+1)$-dimensional subspace
\[\rr_{\ell,m}:=\Span(v_\ell,\ldots,v_m)\,.\]
Its unit $(m-\ell)$-sphere is denoted by $\s_{\ell,m}$ and is a subset of $\s_A$ if, and only if, $\ell\geq1$. For instance: $\rr_{0,n}=\rr^{n+1}$ and $\s_{1,n}=\{\sum_{t=1}^nx_tv_t\in A:\sum_{t=1}^nx_t^2=1\}=\s_A\cap\rr^{n+1}$.

\begin{definition}
To each \emph{number of steps} $\tau\in\{0,\ldots,n\}$ and each \emph{list of steps} $T=(t_0,\ldots,t_\tau)\in\nn^{\tau+1}$, with $0\leq t_0<t_1<\ldots<t_\tau=n$, we associate the \emph{$T$-fan}
\[\rr_{0,t_0}\subsetneq\rr_{0,t_1}\subsetneq\ldots\subsetneq\rr_{0,t_\tau}=\rr^{n+1}\,.\]
The first subspace, $\rr_{0,t_0}$, is called the \emph{mirror}. We define the \emph{$T$-torus} as
\[\torus:=\s_{t_0+1,t_1}\times\ldots\times\s_{t_{\tau-1}+1,t_\tau}\]
when $\tau\geq1$ and as $\torus:=\emptyset$ when $\tau=0$.
\end{definition}

We assume henceforth $\tau\in\{0,\ldots,n\}$ and $T=(t_0,\ldots,t_\tau)\in\nn^{\tau+1}$ (with $0\leq t_0<t_1<\ldots<t_\tau=n$) are fixed. Necessarily, $n\geq t_0+\tau$. The mirror $\rr_{0,t_0}$ of the $T$-fan is either the real axis $\rr$ or a hypercomplex subspace of $A$, while all other elements of the $T$-fan are hypercomplex subspaces of $A$. Moreover, if $\tau\geq1$ then, for every $h\in\{1,\ldots,\tau\}$, the sphere $\s_{t_{h-1}+1,t_h}$ has dimension $t_h-t_{h-1}-1$, whence the $T$-torus $\torus$ is a subset of $(\s_A)^\tau$ of dimension $n-t_0-\tau$. On the other hand, $\torus=\emptyset$ if $\tau=0$.

\begin{example}
For paravectors in $C\ell(0,n)$, Example~\ref{ex:paravectors}, the $T$-fan is
\[\rr^{t_0+1}\subsetneq\rr^{t_1+1}\subsetneq\ldots\subsetneq\rr^{t_\tau+1}=\rr^{n+1}\,.\]
\end{example}

\begin{example}\label{ex:quaternions2}
In $\hh$, Example~\ref{ex:quaternions}, the $3$-fan is $\hh$, the $(2,3)$-fan is $\rr+i\rr+j\rr\subsetneq\hh$, the $(1,3)$-fan is $\cc\subsetneq\hh$, the $(0,3)$-fan is $\rr\subsetneq\hh$, the $(1,2,3)$-fan is $\cc\subsetneq\rr+i\rr+j\rr\subsetneq\hh$, the $(0,2,3)$-fan is $\rr\subsetneq\rr+i\rr+j\rr\subsetneq\hh$, the $(0,1,3)$-fan is $\rr\subsetneq\cc\subsetneq\hh$, and the $(0,1,2,3)$-fan is $\rr\subsetneq\cc\subsetneq\rr+i\rr+j\rr\subsetneq\hh$.
\end{example} 

\begin{remark}\label{rmk:decomposedvariable}
Each $x=\sum_{\ell=0}^nv_\ell x_\ell\in\rr^{n+1}$ can be decomposed as $x=x^0+x^1+\ldots+x^\tau$, where $x^h:=\sum_{\ell=t_{h-1}+1}^{t_h}x_\ell v_\ell\in\rr_{t_{h-1}+1,t_h}$ (with $t_{-1}:=-1$). This decomposition is orthogonal, whence unique. Moreover, if $\tau\geq1$ then there exist $\beta=(\beta_1,\ldots,\beta_\tau)\in\rr^\tau$ and $J=(J_1,\ldots,J_\tau)\in\torus$ such that
\begin{equation}\label{eq:decomposedvariable}
x=x^0+\beta_1J_1+\ldots+\beta_\tau J_\tau\,.
\end{equation}
This equality holds true exactly when, for each $h\in\{1,\ldots,\tau\}$: either $x^h\neq0,\beta_h=\pm\Vert x^h\Vert$ and $J_h=\frac{x^h}{\beta_h}$; or $x^h=0,\beta_h=0$ and $J_h$ is any element of $\s_{t_{h-1}+1,t_h}$.
\end{remark}

We now wish to define $J$-monogenicity. This requires a preliminary lemma.

\begin{lemma}
If $\tau\geq1$, fix $J=(J_1,\ldots,J_\tau)\in\torus$ and set
\[\rr^{t_0+\tau+1}_J:=\Span(\B_J)\,,\quad\B_J:=(v_0,v_1,\ldots,v_{t_0},J_1,\ldots,J_\tau)\,.\]
If $\tau=0$ (whence $t_0=n\geq1$), set $J:=\emptyset,\B_\emptyset:=(v_0,v_1,\ldots,v_{t_0}),\rr^{t_0+1}_\emptyset:=\Span(\B_\emptyset)=\rr^{n+1}$. 
In either case, $\B_J$ is a hypercomplex basis of $\rr^{t_0+\tau+1}_J$, which is therefore a hypercomplex subspace of $A$ contained in $\rr^{n+1}$. Moreover, if $J'\in\torus$, then the equality $\rr^{t_0+\tau+1}_J=\rr^{t_0+\tau+1}_{J'}$ is equivalent to $J'\in\{\pm J_1\}\times\ldots\times\{\pm J_\tau\}$.
\end{lemma}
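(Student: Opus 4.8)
The plan is to verify the three defining conditions of a hypercomplex basis for $\B_J$ using the characterization stated right after the definition: a basis $(w_0,w_1,\ldots,w_m)$ is hypercomplex if, and only if, $t(w_s)=0$, $n(w_s)=1$ and $t(w_sw_t^c)=0$ for all distinct $s,t\in\{1,\ldots,m\}$. For the $\tau=0$ case there is nothing to prove, since $\B_\emptyset=(v_0,\ldots,v_{t_0})$ is a shortened hypercomplex basis and the remark after the definition already grants that shortened hypercomplex bases are hypercomplex. So I would concentrate on $\tau\geq1$.

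First I would record the key structural fact that each $J_h=\sum_{\ell=t_{h-1}+1}^{t_h}c_\ell v_\ell$ lies in $\s_{t_{h-1}+1,t_h}\subseteq\s_A$, so that by definition $t(J_h)=0$ and $n(J_h)=1$; this handles the individual-vector conditions for the new entries. For the old entries $v_1,\ldots,v_{t_0}$ the conditions hold because $\B$ is hypercomplex. The remaining work is the orthogonality condition $t(w_sw_t^c)=0$ for distinct pairs among $v_1,\ldots,v_{t_0},J_1,\ldots,J_\tau$. Here I would invoke the last statement of Theorem~\ref{thm:hypercomplexbasis}: since all these vectors live in the hypercomplex subspace $\rr^{n+1}$ (with basis $\B$), we have $t(xy^c)=2\langle x,y\rangle$ for all $x,y\in\rr^{n+1}$. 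Thus $t(w_sw_t^c)=0$ is equivalent to $\langle w_s,w_t\rangle=0$. The pairs $(v_s,v_t)$ are orthogonal because $\B$ is orthonormal; the pairs $(v_s,J_h)$ are orthogonal because $v_s\in\rr_{1,t_0}$ while $J_h\in\rr_{t_{h-1}+1,t_h}$ and these coordinate blocks are disjoint; and the pairs $(J_g,J_h)$ with $g\neq h$ are orthogonal for the same block-disjointness reason. This simultaneously shows the $w_s$ are linearly independent, so $\B_J$ is genuinely a basis of its span. By the characterization, $\B_J$ is a hypercomplex basis, and by the second statement of Theorem~\ref{thm:hypercomplexbasis}, $\rr^{t_0+\tau+1}_J=\Span(\B_J)$ is a hypercomplex subspace of $A$; its containment in $\rr^{n+1}$ is clear since all generators lie in $\rr^{n+1}$.

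For the final equivalence, I would argue both directions using the orthogonal decomposition of $\rr^{n+1}$ into the blocks $\rr_{1,t_0},\rr_{t_0+1,t_1},\ldots,\rr_{t_{\tau-1}+1,t_\tau}$. The easy direction is immediate: if $J_h'=\pm J_h$ for each $h$, then $\B_{J'}$ and $\B_J$ have the same span since scaling a basis vector by $-1$ does not change the span. For the converse, suppose $\rr^{t_0+\tau+1}_J=\rr^{t_0+\tau+1}_{J'}$. Intersecting both subspaces with the block $\rr_{t_{h-1}+1,t_h}$, I would observe that $\rr^{t_0+\tau+1}_J\cap\rr_{t_{h-1}+1,t_h}$ is exactly $\rr J_h$: indeed, the generators $v_0,\ldots,v_{t_0}$ and $J_g$ for $g\neq h$ all lie in blocks orthogonal to $\rr_{t_{h-1}+1,t_h}$, so a vector of $\rr^{t_0+\tau+1}_J$ has a component in the $h$-th block only through its $J_h$-coefficient. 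Hence $\rr J_h=\rr J_h'$, which together with $\Vert J_h\Vert=\Vert J_h'\Vert=1$ forces $J_h'=\pm J_h$. Collecting over all $h$ gives $J'\in\{\pm J_1\}\times\ldots\times\{\pm J_\tau\}$.

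I do not expect a genuine obstacle here: the whole argument rests on the orthogonal block decomposition induced by $\B$ together with the translation of the $*$-structure conditions into Euclidean orthogonality via Theorem~\ref{thm:hypercomplexbasis}. The one point requiring slight care is making the block-intersection identity $\rr^{t_0+\tau+1}_J\cap\rr_{t_{h-1}+1,t_h}=\rr J_h$ precise, since it is the crux of the converse direction; everything else is routine orthonormality bookkeeping.
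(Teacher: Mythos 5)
Your proposal is correct and follows essentially the same route as the paper: the $*$-conditions for a hypercomplex basis are reduced to Euclidean orthogonality via Theorem~\ref{thm:hypercomplexbasis} and then verified through the mutual orthogonality of the blocks $\rr_{0,t_0},\rr_{t_0+1,t_1},\ldots,\rr_{t_{\tau-1}+1,t_\tau}$. Your block-intersection argument for the final equivalence is just a spelled-out version of what the paper dispatches by citing Remark~\ref{rmk:decomposedvariable} (uniqueness of the orthogonal decomposition of the variable), so there is no substantive difference.
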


\begin{proof}
If $\tau=0$, then $\B_\emptyset:=(v_0,v_1,\ldots,v_{t_0})=(v_0,v_1,\ldots,v_n)$ is a hypercomplex basis of $\rr^{t_0+1}_\emptyset=\rr^{n+1}$ by construction.

Now assume $\tau\geq1$. By construction, $v_0=1,v_1,\ldots,v_{t_0},J_1,\ldots,J_\tau\in\s_{1,n}=\s_A\cap\rr^{n+1}$ and $t(v_\ell v_m^c)=0$ for all distinct $\ell,m\in\{1,\ldots,t_0\}$. Moreover, for all $\ell\in\{1,\ldots,t_0\}$ and for all distinct $h,h'\in\{1,\ldots,\tau\}$, $v_\ell,J_h$ belong to the mutually orthogonal subspaces $\rr_{0,t_0},\rr_{t_{h-1}+1,t_h}$ and $J_h,J_{h'}$ belong to the mutually orthogonal subspaces $\rr_{t_{h-1}+1,t_h},\rr_{t_{h'-1}+1,t_{h'}}$. This proves that $\B_J$ is a basis of $\rr^{t_0+\tau+1}_J$ and yields, thanks to Theorem~\ref{thm:hypercomplexbasis}, the remaining desired equalities $t(v_\ell J_h^c)=\langle v_\ell,J_h\rangle=0$ and $t(J_hJ_{h'}^c)=\langle J_h,J_{h'}\rangle=0$. The last statement follows immediately from Remark~\ref{rmk:decomposedvariable}.
\end{proof}

\begin{definition}
If $\tau\geq1$, fix $J=(J_1,\ldots,J_\tau)\in\torus$. If $\tau=0$, set $J:=\emptyset$. For any open subset $V$ of $\rr^{t_0+\tau+1}_J$, we define the \emph{$J$-Cauchy-Riemann operator} $\debar_J:\mscr{C}^{1}(V,A)\to\mscr{C}^0(V,A)$ as $\debar_J:=2\debar_{\B_J}$, according to~\cite[Definition 2]{perotticr}. We also define the operators $\partial_J:\mscr{C}^{1}(V,A)\to\mscr{C}^0(V,A)$ and $\Delta_J:\mscr{C}^{2}(V,A)\to\mscr{C}^0(V,A)$ as $\partial_J:=2\partial_{\B_J}$ and $\Delta_J:=\Delta_{\B_J}$, according to~\cite[page 7]{perotticr}. Explicitly, referring to the decomposition~\eqref{eq:decomposedvariable} of the variable $x$, we have
\begin{align*}
\debar_J&=\partial_{x_0}+v_1\partial_{x_1}+\ldots+v_{t_0}\partial_{x_{t_0}}+J_1\partial_{\beta_1}+\ldots+J_\tau\partial_{\beta_\tau}\,,\\
\partial_J&=\partial_{x_0}-v_1\partial_{x_1}-\ldots-v_{t_0}\partial_{x_{t_0}}-J_1\partial_{\beta_1}-\ldots-J_\tau\partial_{\beta_\tau}\,,\\
\Delta_J&=\partial_{x_0}^2+\partial_{x_1}^2+\ldots+\partial_{x_{t_0}}^2+\partial_{\beta_1}^2+\ldots+\partial_{\beta_\tau}^2\,.
\end{align*}
The kernel of $\debar_J$ is denoted by $\mon_J(V,A)$ and its elements are called \emph{$J$-monogenic} functions. The elements of the kernel of $\Delta_J$ are called \emph{$J$-harmonic} functions.
\end{definition}

Using the formal~\cite[Definition 2]{perotticr} is necessary to guarantee, for $J,J'\in\torus$,
\begin{equation}\label{eq:CRwellposed}
\debar_J=\debar_{J'}\ \Longleftarrow\ \rr^{t_0+\tau+1}_J=\rr^{t_0+\tau+1}_{J'}\,.
\end{equation}
Similar considerations apply to $\partial_J,\Delta_J$. By~\cite[Proposition 5 (b)]{perotticr}, we remark:

\begin{remark}\label{rmk:J-harmonic}
Fix any open subset $V$ of $\rr^{t_0+\tau+1}_J$. The equalities $\debar_J\partial_J=\partial_J\debar_J=\Delta_J$ hold true on $\mscr{C}^{2}(V,A)$. In particular, every $\mscr{C}^{2}$ $J$-monogenic function is $J$-harmonic, whence real analytic.
\end{remark}

We now define the new concept of $T$-regular function.

\begin{definition}
If either $J\in\torus$ or $J=\emptyset$, for every $Y\subseteq\rr^{n+1},f:Y\to A$, we call $Y_J:=Y\cap\rr^{t_0+\tau+1}_J$ the \emph{$J$-slice} of $Y$ and consider the restriction $f_J:=f_{|_{Y_J}}$. We call a nonempty connected open $\Omega\subseteq\rr^{n+1}$ a \emph{domain} in $\rr^{n+1}$. A function $f:\Omega\to A$ on a domain $\Omega$ is termed \emph{$T$-regular} if, the restriction $f_J:\Omega_J\to A$ is $J$-monogenic for every $J\in\torus$, if $\tau\geq1$ (for $J=\emptyset$, if $\tau=0$). If, moreover, $f(\rr^{t_0+\tau+1}_J)\subseteq\rr^{t_0+\tau+1}_J$ for all $J\in\torus$, then $f$ is called \emph{$T$-slice preserving}. The class of $T$-regular functions $\Omega\to A$ is denoted by $\reg_T(\Omega,A)$.
\end{definition}

Over $C\ell(0,n)$, $T$-regularity subsumes some of the best-known function theories.

\begin{example}
Within the paravector subspace $\rr^{n+1}$ of $C\ell(0,n)$, Example~\ref{ex:paravectors}, fix a domain $\Omega$. For any function $f:\Omega\to C\ell(0,n)$:
\begin{itemize}
\item $f$ is $n$-regular if, and only if, it is in the kernel of the operator $\partial_{x_0}+e_1\partial_{x_1}+\ldots+e_{n}\partial_{x_n}$; this is the definition of \emph{monogenic} function (see, e.g.,~\cite{librosommen,librocnops,librogurlebeck2});
\item $f$ is $(0,n)$-regular if, and only if, for any $J_1\in\s_{1,n}=\s_{C\ell(0,n)}\cap\rr^{n+1}$, the restriction $f_{J_1}$ to the planar domain $\Omega_{J_1}\subseteq\cc_{J_1}$ is a holomorphic map $(\Omega_{J_1},J_1)\to(C\ell(0,n),J_1)$; this is the same as being \emph{slice-monogenic},~\cite{israel} (or \emph{slice-hyperholomorphic},~\cite{librodaniele2}).
\end{itemize}
\end{example}

Over the hypercomplex subspace $\hh$ of $\hh$, we achieve a complete classification of $T$-regularity. In particular, we show that $T$-regularity not only subsumes the best-known function theories, but also includes an entirely new function theory.

\begin{example}\label{ex:quaternions3}
Fix a domain $\Omega$ in $\hh$ and a function $f:\Omega\to\hh$. Then:
\begin{itemize}
\item $f$ is $3$-regular if, and only if, it belongs to the kernel of the left Cauchy-Riemann-Fueter operator $\partial_{x_0}+i\partial_{x_1}+j\partial_{x_2}+k\partial_{x_3}$; this is the definition of \emph{left Fueter-regular} function (see,~\cite{fueter1,fueter2,sudbery});
\item $f$ is $(2,3)$-regular if, and only if, $(\partial_{x_0}+i\partial_{x_1}+j\partial_{x_2}+J_1\partial_{\beta_1})f(x_0+ix_1+jx_2+\beta_1J_1)\equiv0$ for all $J_1\in\s_{3,3}=\{\pm k\}$; this is the same as being left Fueter-regular by~\eqref{eq:CRwellposed};
\item $f$ is $(1,3)$-regular if, and only if,
\[\hskip35pt\debar_{J_1}f(x_0+ix_1+\beta_1J_1):=(\partial_{x_0}+i\partial_{x_1}+J_1\partial_{\beta_1})f(x_0+ix_1+\beta_1J_1)\equiv0\]
for all $J_1$ in the $(1,3)$-torus $\s_{2,3}$, which is simply the circle $\s^1:=\s_\hh\cap(j\rr+k\rr)$; this is a brand new theory, studied in the forthcoming Section~\ref{sec:(1,3)-regularity};
\item $f$ is $(0,3)$-regular if, and only if, for any $J_1\in\s_{1,3}=\s_\hh$, the restriction $f_{J_1}$ to the planar domain $\Omega_{J_1}\subseteq\cc_{J_1}$ is a holomorphic map $(\Omega_{J_1},J_1)\to(\hh,J_1)$; this is the definition of \emph{slice-regular} function,~\cite{librospringer2} (or \emph{Cullen-regular} in the original articles~\cite{cras,advances});
\item $f$ is $(1,2,3)$-regular if, and only if, $(\partial_{x_0}+i\partial_{x_1}+J_1\partial_{\beta_1}+J_2\partial_{\beta_2})f(x_0+ix_1+\beta_1J_1+\beta_2J_2)\equiv0$ for all $(J_1,J_2)\in\s_{2,2}\times\s_{3,3}=\{\pm j\}\times\{\pm k\}$; this is the same as being left Fueter-regular by~\eqref{eq:CRwellposed};
\item $f$ is $(0,1,3)$-regular if, and only if, $(\partial_{x_0}+J_1\partial_{\beta_1}+J_2\partial_{\beta_2})f(x_0+\beta_1J_1+\beta_2J_2)\equiv0$ for all $(J_1,J_2)\in\s_{1,1}\times\s_{2,3}=\{\pm i\}\times\s^1$; this is the same as $(1,3)$-regularity by~\eqref{eq:CRwellposed};
\item $f$ is $(0,2,3)$-regular if, and only if, $(\partial_{x_0}+J_1\partial_{\beta_1}+J_2\partial_{\beta_2})f(x_0+\beta_1J_1+\beta_2J_2)\equiv0$ for all $(J_1,J_2)\in\s_{1,2}\times\s_{3,3}=(\s_\hh\cap(i\rr+j\rr))\times\{\pm k\}$; this class is the image of the class of $(0,1,3)$-regular functions when the standard basis $(1,i,j,k)$ is replaced by $(1,k,-j,i)$;
\item $f$ is $(0,1,2,3)$-regular if, and only if, $(\partial_{x_0}+J_1\partial_{\beta_1}+J_2\partial_{\beta_2}+J_3\partial_{\beta_3})f(x_0+\beta_1J_1+\beta_2J_2+\beta_3J_3)\equiv0$ for all $(J_1,J_2,J_3)\in\s_{1,1}\times\s_{2,2}\times\s_{3,3}=\{\pm i\}\times\{\pm j\}\times\{\pm k\}$; this is the same as being left Fueter-regular by~\eqref{eq:CRwellposed}.
\end{itemize}
\end{example}

Over the hypercomplex subspace $\oo$ of $\oo$, the notions of $7$-regular and $(0,7)$-regular function coincide, respectively, with the notion of \emph{octonionic monogenic} function (based on~\cite{sce}, see the recent~\cite{krausshardifferentialtopological}) and with the notion of \emph{slice-regular} function (see~\cite{rocky}). Additionally, within $\hh$, the nonstandard choice of the the $2$-fan $\rr+j\rr+k\rr$ with $\B=(1,-k,j),\B'=(1,-k,j,i)$, recovers as $2$-regular functions the theory of~\cite{moisilteodorescu}, for the reasons explained in~\cite[page 30]{perotticr}.

When a domain $\Omega$ in $\rr^{n+1}$ does not intersect the mirror $\rr_{0,t_0}$, then two different $J$-slices $\Omega_J$ never intersect and an $f\in\reg_T(\Omega,A)$ needs not be continuous, even when all restrictions $f_J$ are $\mscr{C}^2$ (whence real analytic by Remark~\ref{rmk:J-harmonic}). To single out subclasses of better-behaved $T$-regular functions, we need some further definitions. 

\begin{definition}
A domain $\Omega\subseteq\rr^{n+1}$ is called a \emph{$T$-slice domain} if it intersects the mirror $\rr_{0,t_0}$ and if, for any $J\in\torus$, the $J$-slice $\Omega_J$ is connected. For $D\subseteq\rr_{0,t_0}\times\rr^\tau$, we set
\[\Omega_D:=\{x^0+\beta_1J_1+\ldots+\beta_\tau J_\tau\in\rr^{n+1}: (x^0,\beta)\in D, J\in\torus\}\]
if $\tau\geq1$ (and $\Omega_D:=\{x^0\in\rr^{n+1} : (x^0,0)\in D\}$ if $\tau=0$, taking into account that $\rr^0=\{0\}$). A subset of $\rr^{n+1}$ is termed \emph{$T$-symmetric} if it equals $\Omega_D$ for some $D\subseteq\rr_{0,t_0}\times\rr^\tau$. The \emph{$T$-symmetric completion} $\widetilde{Y}$ of a set $Y\subseteq\rr^{n+1}$ is the smallest $T$-symmetric subset of $\rr^{n+1}$ containing $Y$. For each point $x\in\rr^{n+1}$, we denote by $\s_x$ the $T$-symmetric completion of the singleton $\{x\}$.
\end{definition}

We now define $T$-stem functions. For any $h\in\{1,\ldots,\tau\}$, consider the reflection
\[\rr^\tau\to\rr^\tau\,,\quad\beta=(\beta_1,\ldots,\beta_\tau)\mapsto\overline{\beta}^h:=(\beta_1,\ldots,\beta_{h-1},-\beta_h,\beta_{h+1},\ldots,\beta_\tau)\,.\]
For the rest of the present section, we assume $D$ to be a subset of $\rr_{0,t_0}\times\rr^\tau$, invariant under the reflection $(x^0,\beta)\mapsto(x^0,\overline{\beta}^h)$ for every $h\in\{1,\ldots,\tau\}$. Moreover, we let $\{E_K\}_{K\in\mscr{P}(\tau)}$ denote the canonical real vector basis of $\rr^{2^\tau}$: this is to avoid possible confusion with the basis of $A$ in the special case when $A$ is $C\ell(0,n)$
.

\begin{definition}
Let $F:D\to A\otimes\rr^{2^\tau}$ be a function $F=\sum_{K\in\mscr{P}(\tau)}E_KF_K$ with components $F_K:D\to A$. We say that $F$ is a \emph{$T$-stem function} if
\[F_K(x^0,\overline{\beta}^h)=\left\{
\begin{array}{ll}
F_K(x^0,\beta)&\mathrm{if\ }h\not\in K\\
-F_K(x^0,\beta)&\mathrm{if\ }h\in K
\end{array}
\right.\]
for all $K\in\mscr{P}(\tau)$, for all $(x^0,\beta)\in D$, and for all $h\in\{1,\ldots,\tau\}$.
\end{definition}

Now let us define $T$-functions.

\begin{definition}
To each $T$-stem function $F=\sum_{K\in\mscr{P}(\tau)}E_KF_K:D\to A\otimes\rr^{2^\tau}$, we associate the \emph{induced} function $f=\I(F):\Omega_D\to A$, whose value at $x=x^0+\beta_1J_1+\ldots+\beta_\tau J_\tau\in\Omega_D$ is
\[f(x):=F_{\emptyset}(x^0,\beta)+\sum_{1\leq p\leq\tau}\sum_{1\leq k_1<\ldots<k_p\leq\tau} J_{k_1}(J_{k_2}(\ldots(J_{k_{p-1}}(J_{k_p}F_{k_1\ldots k_p}(x^0,\beta)))\ldots))\,.\]
A function induced by a $T$-stem function is called a \emph{$T$-function}. We denote the class of $T$-functions $\Omega_D\to A$ by $\slice(\Omega_D,A)$. If $\Omega_D$ is a domain in $\rr^{n+1}$, then the elements of the intersection $\sr(\Omega_D,A):=\slice(\Omega_D,A)\cap\reg_T(\Omega_D,A)$ are called \emph{strongly $T$-regular} functions.
\end{definition}

\begin{remark}
The map $\I$ from the class of $T$-stem functions on $D$ to $\slice(\Omega_D,A)$ is well-defined, owing to the reflection symmetries of $T$-stem functions, and bijective.
\end{remark}

In the very special case when $\tau=0$, every subset of $\rr^{n+1}$ is $T$-symmetric, every domain $\Omega$ in $\rr^{n+1}$ is a $T$-symmetric $T$-slice domain and every function $f:\Omega\to A$ is a $T$-function, induced by a $T$-stem function $F=F_\emptyset$.

We postpone any further study of the properties of general $T$-regular functions to a forthcoming paper. In the present work, we focus henceforth on the new quaternionic function theory discovered in Example~\ref{ex:quaternions3}.


\section{$(1,3)$-regular quaternionic functions}\label{sec:(1,3)-regularity}


\subsection{Foundational material}

In this section, we work in $\hh$ with the $(1,3)$-fan $\cc\subsetneq\hh$ and the $(1,3)$-torus $\s_{2,3}=\s^1$, see Examples~\ref{ex:quaternions},~\ref{ex:quaternions2}, and~\ref{ex:quaternions3}. We decompose the variable $x=x_0+ix_1+jx_2+kx_3\in\hh$ as $x=z+\beta J$ with $z\in\cc, \beta\in\rr, J\in\s^1$. If $x\in\hh\setminus\cc$, this is true exactly when
\[z=x_0+ix_1,\quad \beta=\pm\sqrt{x_2^2+x_3^2},\quad J=\pm\frac{jx_2+kx_3}{\sqrt{x_2^2+x_3^2}}\,.\]
On the other hand, every element $z$ of the mirror $\cc\subsetneq\hh$ may be expressed as $z+0J$ for all $J\in\s^1$. Over quaternions $x\in\hh$, the $*$-involution considered is the \emph{conjugation} $x=x_0+ix_1+jx_2+kx_3\mapsto\bar x=x_0-ix_1-jx_2-kx_3$; the Euclidean norm $\Vert x\Vert=\sqrt{x\bar x}$ of $x$ is called \emph{modulus} of $x$ and denoted by the symbol $|x|$.

For $J\in\s^1$, the spaces $\rr^{2+1}_J:=\Span_\rr(1,i,J)$ are the hyperplanes of $\hh$ through $\cc$. We have $\rr^{2+1}_J\cap\rr^{2+1}_K=\cc$ when $K\neq\pm J$ and $\rr^{2+1}_J=\rr^{2+1}_{-J}$. When $J=j$, the hyperplane $\rr^{2+1}_j$ is the standard $\rr^{2+1}$ used in the theory of \emph{monogenic} functions (also called \emph{$C\ell(0,2)$ left-holomorphic functions} in~\cite{librogurlebeck2}): namely, quaternion-valued $\mscr{C}^1$ functions of the variable $x_0+ix_1+jx_2$ belonging to the kernel of the operator $\partial_{x_0}+i\partial_{x_1}+j\partial_{x_2}$. For any $J\in\s^1$, $(i,J,iJ)$ is a Hamiltonian triple (see~\cite[\S8.1]{ebbinghaus}) and $\psi_J:\rr^{2+1}\to\rr^{2+1}_J$, $\psi_J(x_0+ix_1+jx_2)=x_0+ix_1+Jx_2$ is the restriction of a $*$-isomorphism $\hh\to\hh$. Thus, if $V\subseteq\rr^{2+1}_J$ is open, then the right $\hh$-module $\mon_J(V,\hh)$ of $J$-monogenic functions has the same properties as a class of monogenic functions over $C\ell(0,2)$. In particular, $J$-monogenicity implies real analyticity, see Remark~\ref{rmk:seriesexpansionofJmonogenic}, and is preserved under composition with translations in $\rr^{2+1}_J$.

\begin{remark}\label{rmk:complextranslation}
Given a domain $\Omega$ in $\hh$, the class $\reg_{(1,3)}(\Omega,\hh)$ of all $(1,3)$-regular functions $\Omega\to\hh$ is a right $\hh$-module. Moreover, if $f\in\reg_{(1,3)}(\Omega,\hh)$ and $z_0\in\Omega\cap\cc$, then setting $g(x):=f(x+z_0)$ defines a $g\in\reg_{(1,3)}(\Omega-z_0,\hh)$. This $g$ is $(1,3)$-slice preserving if, and only if, $f$ is.
\end{remark}

We remark that $(1,3)$-symmetry is circular symmetry with respect to the plane $\cc$ within $\hh$. 
We fix henceforth $D\subseteq\cc\times\rr$, invariant under the reflection $(z,\beta)\mapsto(z,-\beta)$. A function $F=F_\emptyset+E_1F_1:D\to\hh\otimes\rr^2$ is a \emph{$(1,3)$-stem function} if $F_\emptyset,F_1$ are, respectively, even and odd in $\beta$. The $(1,3)$-function $f=\I(F)$, defined as $f(z+\beta J):=F_\emptyset(z,\beta)+JF_1(z,\beta)$ for $(z,\beta)\in D,J\in\s^1$, is affine along each circle $\s_x$.


\subsection{Polynomial functions}

For all $k\in\nn$, let $U_k$ denote the right $\hh$-submodule of $\reg_{(1,3)}(\hh,\hh)$ consisting of those elements $f$ such that $f(x_0+ix_1+jx_2+kx_3)$ is a $k$-homogeneous polynomial map in the four real variables $x_0,x_1,x_2,x_3$. We construct a right $\hh$-basis $\F_k$ of $U_k$, as follows.

\begin{definition}
For $\k=(k_1,k_2)\in\zz^2$, we set $|\k|:=k_1+k_2$. If $k_1<0$ or $k_2<0$, we define $\T_\k(x):\equiv0$; we define $\T_{(0,0)}:\equiv1$; if $\k\in\nn^2\setminus\{(0,0)\}$, we define $\T_\k$ by the equality
\[|\k|\T_\k(x):=k_1\T_{(k_1-1,k_2)}(x)(x_1-(-1)^{k_2}ix_0)+k_2\T_{(k_1,k_2-1)}(x)(x_0+jx_2+kx_3)\,.\]
For all $k\in\nn$, we define $\F_k:=\{\T_\k\}_{|\k|=k}$.
\end{definition}

\begin{example}\label{ex:lowerdegreepolynomials}
$\F_0=\{\T_{(0,0)}\}=\{1\}$, while $\F_1,\F_2$ consist of the functions
\begin{align*}
&\T_{(1,0)}(x)=x_1-ix_0\,,\quad\T_{(0,1)}(x)=x_0+jx_2+kx_3\,,\\
&\T_{(2,0)}(x)=(x_1-ix_0)^2\,,\quad\T_{(0,2)}(x)=(x_0+jx_2+kx_3)^2\,,\\
&2\T_{(1,1)}(x)=(x_0+jx_2+kx_3)(x_1+ix_0)+(x_1-ix_0)(x_0+jx_2+kx_3)\,.
\end{align*}
None of the elements of $\F_1,\F_2$ is a slice-regular function $\hh\to\hh$. Moreover, $\T_{(0,1)},\T_{(0,2)},\T_{(1,1)}:\hh\to\hh$ are not Fueter-regular. 
\end{example}

Clearly, each $\T_\k$ is a $|\k|$-homogeneous polynomial map in the four real variables $x_0,x_1,x_2,x_3$. To prove that $\F_k$ is, indeed, a basis of $U_k$, we rely on some well-known properties of monogenic functions, described in~\cite[\S6.1.2]{librogurlebeck2}.

\begin{remark}
For any $J\in\s^1,k\in\nn$, let $U^J_k$ denote the right submodule of $\mon_J(\rr^{2+1}_J,\hh)$ consisting of those elements $P$ such that $P(x_0+ix_1+\beta J)$ is a $k$-homogeneous polynomial map in the real variables $x_0,x_1,\beta$. Then $U^J_0$ is spanned by $\P^J_{(0,0)}:\equiv1$, while $U^J_1$ is spanned by $\P^J_{(1,0)}:=x_1-ix_0$, also called the Fueter variable $\zeta_1$, and by $\P^J_{(0,1)}:=\beta-Jx_0$, also called the Fueter variable $\zeta_{2,J}$. If we further define $\P^J_\k:\equiv0$ for $\k\in\zz^2$ with $k_1<0$ or $k_2<0$ and, for all remaining $\k\in\nn^2$,
\begin{align*}
|\k|\P^J_\k&:=k_1\P^J_{(k_1-1,k_2)}\zeta_1+k_2\P^J_{(k_1,k_2-1)}\zeta_{2,J}=k_1\zeta_1\P^J_{(k_1-1,k_2)}+k_2\zeta_{2,J}\P^J_{(k_1,k_2-1)}\,,
\end{align*}
then each $\P^J_\k$ preserves $\rr^{2+1}_J$ and $\{\P^J_\k\}_{|\k|=k}$ is a basis of $U^J_k$. To be more precise, we need some preparation. For every open $V\subseteq\rr^{2+1}_J$ and every $\h=(h_0,h_1,h_2)\in\nn^3$, we define $|\h|:=h_0+h_1+h_2$ and the operator $\nabla^\h_J:\mscr{C}^{|\h|}(V,\hh)\to\mscr{C}^0(V,\hh)$ by
\[\nabla^\h_J\phi(x_0+ix_1+\beta J):=\partial^{h_0}_0\partial^{h_1}_1\partial^{h_2}_{2,J}\phi(x_0+ix_1+\beta J),\quad\partial_0:=\partial_{x_0},\partial_1:=\partial_{x_1},\partial_{2,J}:=\partial_\beta\,.\]
More precisely, $\left((\nabla^\h_J\phi)\circ\psi_J\right)(x_0+ix_1+jx_2):=\left(\left(\partial_{x_0}^{h_0}\partial_{x_1}^{h_1}\partial_{x_2}^{h_2}\right)(\phi\circ\psi_J)\right)(x_0+ix_1+jx_2)$. Since the operator $\nabla^\h_J$ commutes with $\debar_J$ on $\mscr{C}^{|\h|+1}(V,\hh)$, it maps $J$-monogenic functions into $J$-monogenic functions. With this construction in mind, we see that $\{\P^J_\k\}_{|\k|=k}$ is a basis of $U^J_k$ because every $P\in U^J_k$ can be expressed as
\begin{equation}\label{eq:Jpolynomialexpansion}
P(x_0+ix_1+\beta J)=\sum_{|\k|=k}\P^J_\k(x_0+ix_1+\beta J)a_\k,\quad a_\k:=\frac{1}{\k!}\nabla^{(0,\k)}_JP(0)
\end{equation}
for $x_0+ix_1+\beta J\in\rr^{2+1}_J$. Moreover, $P$ preserves $\rr^{2+1}_J$ under the following additional assumptions: $a_\k\in\rr^{2+1}_J$ when $k_1=0=k_2$; $a_\k\in\cc_J$ when $k_1=0\neq k_2$, $a_\k\in\cc$ when $k_1\neq0=k_2$, $a_\k\in\rr$ when $k_1\neq0\neq k_2$.
\end{remark}

We also need the next definition and lemma, where $\Omega$ is a fixed domain in $\hh$, $J\in\s^1$ is fixed and $V$ is a fixed open subset of $\rr^{2+1}_J$.

\begin{definition}
For every $\h=(h_0,h_1,h_2)\in\nn^3$, we define an operator $\delta^{\h}:\mscr{C}^{|\h|}(\Omega,\hh)\to\mscr{C}^{0}(\Omega,\hh)$, as follows. For $t\in\nn$, we set
\begin{align*}
&\delta^{(h_0,h_1,2t)}:=\partial^{h_0}_0\partial^{h_1}_1(\partial_0-i\partial_1)^{t}(\partial_0+i\partial_1)^{t}=\partial^{h_0}_0\partial^{h_1}_1(\partial_0^2+\partial_1^2)^{t}\,,\\
&\delta^{(h_0,h_1,2t+1)}:=\partial^{h_0}_0\partial^{h_1}_1(\partial_0-i\partial_1)^{t}(\partial_0+i\partial_1)^{t+1}=\partial^{h_0}_0\partial^{h_1}_1(\partial_0^2+\partial_1^2)^{t}(\partial_0+i\partial_1)\,.
\end{align*}
The same formulas define operators $\delta^{(h_0,h_1,2t)}_J,\delta^{(h_0,h_1,2t+1)}_J$ on $\mscr{C}^{|\h|}(V,\hh)$.
\end{definition}

\begin{lemma}\label{lem:PTnabladelta}
Fix $\k=(k_1,k_2)\in\zz^2,\h=(h_0,h_1,h_2)\in\nn^3$. The restriction $(\T_\k)_J$ is the $J$-monogenic polynomial $\P^J_\k J^{k_2}\in U^J_k$. In particular, $(\T_\k)_J$ maps $\rr^{2+1}_J$ into itself when $k_2$ is even; into $\rr^{2+1}_JJ=\rr+j\rr+k\rr$ when $k_2$ is odd. If $\phi\in\mon_J(V,\hh)$, then $\delta^{\h}_J\phi=J^{-h_2}\nabla^\h_J\phi$. If $f\in\reg_{(1,3)}(\Omega,\hh)$ is $\mscr{C}^{|\h|}$, then $(\delta^{\h}f)_J=J^{-h_2}\nabla^\h_Jf_J$.
\end{lemma}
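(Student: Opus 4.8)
The plan is to prove the four assertions in turn, the heart being the first identity $(\T_\k)_J=\P^J_\k J^{k_2}$, from which the slice-preservation statement is immediate and on which the operator identities ultimately rest. Throughout I would exploit the two structural facts about a torus element $J\in\s^1$: since $J\in j\rr+k\rr$ is orthogonal to $i$, one has $iJ=-Ji$; and since $J\in\s_\hh$, one has $J^2=-1$. I would also record that, on the slice $\rr^{2+1}_J$, the variable reads $x=x_0+ix_1+\beta J$, so that $jx_2+kx_3$ restricts to $\beta J$ and the Fueter variables become $\zeta_1=x_1-ix_0$ and $\zeta_{2,J}=\beta-Jx_0$.

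First I would prove $(\T_\k)_J=\P^J_\k J^{k_2}$ by induction on $|\k|$, the base case $(0,0)$ being trivial. Restricting the defining recursion of $\T_\k$ to $\rr^{2+1}_J$ turns its two right factors into $x_1-(-1)^{k_2}ix_0$ and $x_0+\beta J$; substituting the inductive hypotheses $(\T_{(k_1-1,k_2)})_J=\P^J_{(k_1-1,k_2)}J^{k_2}$ and $(\T_{(k_1,k_2-1)})_J=\P^J_{(k_1,k_2-1)}J^{k_2-1}$, everything reduces to the two commutation identities
\[J^{k_2}\bigl(x_1-(-1)^{k_2}ix_0\bigr)=\zeta_1 J^{k_2},\qquad J^{k_2-1}(x_0+\beta J)=\zeta_{2,J}J^{k_2}\,.\]
The first follows from $J^{k_2}i=(-1)^{k_2}iJ^{k_2}$ (iterating $iJ=-Ji$), which is precisely why the sign $(-1)^{k_2}$ was built into the definition of $\T_\k$; the second follows from $J^{k_2+1}=-J^{k_2-1}$, i.e.\ from $J^2=-1$. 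Matching terms then reproduces the recursion for $\P^J_\k J^{k_2}$, completing the induction. Since right multiplication by the constant $J^{k_2}$ preserves both $J$-monogenicity and $k$-homogeneity, $\P^J_\k J^{k_2}\in U^J_k$. The slice-preservation statement is then immediate: each $\P^J_\k$ preserves $\rr^{2+1}_J$, while $J^{k_2}$ equals $\pm1$ when $k_2$ is even and $\pm J$ when $k_2$ is odd; in the odd case a one-line computation using $\Span_\rr(J,iJ)=j\rr+k\rr$ identifies $\rr^{2+1}_JJ=\rr+j\rr+k\rr$.

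For the operator identity $\delta^\h_J\phi=J^{-h_2}\nabla^\h_J\phi$ I would argue by induction on $h_2$, after factoring out the commuting operators $\partial_0^{h_0}\partial_1^{h_1}$. The $J$-monogenicity of $\phi$ gives the first-order relation $\partial_\beta\phi=J(\partial_0+i\partial_1)\phi$, and applying $\partial_\beta$ repeatedly amounts to carrying one factor $J$ past the operators $\partial_0,\partial_1,i$ at each step; since $\partial_\beta,\partial_0,\partial_1$ commute and left multiplication by $J$ flips the sign of each occurrence of $i$, one obtains $\partial_\beta^{h_2}\phi=J^{h_2}D_{h_2}\phi$ with $D_{h_2}=(\partial_0^2+\partial_1^2)^{t}$ for $h_2=2t$ and $D_{h_2}=(\partial_0^2+\partial_1^2)^{t}(\partial_0+i\partial_1)$ for $h_2=2t+1$. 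These are exactly the $h_2$-parts of $\delta^\h_J$, which yields the claim. The global statement for $f\in\reg_{(1,3)}(\Omega,\hh)$ then follows because $\delta^\h$ involves only the derivatives $\partial_0,\partial_1$ in the directions $1,i$ spanning part of $\rr^{2+1}_J$, so it commutes with restriction to the slice, giving $(\delta^\h f)_J=\delta^\h_J f_J$; as $f_J$ is $J$-monogenic by $(1,3)$-regularity, the previous identity applies with $\phi=f_J$.

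I expect the main obstacle to be the sign bookkeeping in the inductive step of the first identity: the interplay between the factor $(-1)^{k_2}$ in the definition of $\T_\k$, the sign produced by commuting $i$ past $J^{k_2}$, and the reduction $J^{k_2+1}=-J^{k_2-1}$ must align perfectly, and it is easy to misplace a sign. Everything else is routine once $iJ=-Ji$ and $J^2=-1$ are in hand.
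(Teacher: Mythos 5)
Your proposal is correct and follows essentially the same route as the paper: the identity $(\T_\k)_J=\P^J_\k J^{k_2}$ by induction on $|\k|$ using exactly the two commutation relations $J^{k_2}i=(-1)^{k_2}iJ^{k_2}$ and $J^{k_2-1}=-J^{k_2+1}$, then $\nabla^\h_J\phi=J^{h_2}\delta^\h_J\phi$ by induction on $h_2$ via the first-order relation $\partial_\beta\phi=J(\partial_0+i\partial_1)\phi$ (the paper makes explicit the point you use implicitly, namely that $\nabla^\h_J\phi$ remains $J$-monogenic so the relation can be iterated), and finally compatibility of $\delta^\h$ with restriction to the slice. No substantive differences.
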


\begin{proof}
We first prove that $(\T_\k)_J=\P^J_\k J^{k_2}$, by induction on $|\k|$. The thesis is obviously true when $k_1<0$ or $k_2<0$, since in this case $\T_\k$ and $\P^J_\k$ both vanish identically. Similarly, $(\T_{(0,0)})_J=\P^J_{(0,0)} J^0$ since $\T_{(0,0)}:\equiv1\equiv:\P^J_{(0,0)}$ Assume the thesis true for all $\k'$ with $|\k'|=k-1$. We can prove it for $\k$ with $|\k|=k$ by means of the following computation, where we omit the variable $x_0+ix_1+\beta J$:
\begin{align*}
k(\T_\k)_J&=k_1\T_{(k_1-1,k_2)}\cdot(x_1-(-1)^{k_2}ix_0)+k_2\T_{(k_1,k_2-1)}\cdot(x_0+\beta J)\\
&=k_1\P^J_{(k_1-1,k_2)}J^{k_2}(x_1-(-1)^{k_2}ix_0)+k_2\P^J_{(k_1,k_2-1)}J^{k_2-1}(x_0+\beta J)\\
&=\left(k_1\P^J_{(k_1-1,k_2)}\cdot(x_1-ix_0)+k_2\P^J_{(k_1,k_2-1)}\cdot(\beta-Jx_0)\right)J^{k_2}\\
&=k\P^J_\k J^{k_2}\,.
\end{align*}

Now we fix $\phi\in\mon_J(V,\hh)$ and prove $\nabla^{\h}_J\phi=J^{h_2}\delta^{\h}_J\phi$ by induction on $h_2\in\nn$.
\begin{align*}
&\nabla^{(h_0,h_1,0)}_J\phi=\partial^{h_0}_0\partial^{h_1}_1\phi=\delta^{(h_0,h_1,0)}_J\phi=J^{0}\delta^{(h_0,h_1,0)}_J\phi\,,\\
&\nabla^{(h_0,h_1,1)}_J\phi=\partial^{h_0}_0\partial^{h_1}_1\partial_{2,J}\phi=\partial^{h_0}_0\partial^{h_1}_1J(\partial_0+i\partial_1)\phi=J\delta^{(h_0,h_1,1)}_J\phi\,.
\end{align*}
In the second chain of equalities, we have used the fact that $\partial_{2,J}\phi=J(\partial_0+i\partial_1)\phi$, which is a consequence of the equality $0\equiv\debar_J \phi=(\partial_0+i\partial_1+J\partial_{2,J})\phi$. We remark that, for all $\h\in\nn^3$, the function $\nabla^{\h}_J\phi$ is still $J$-monogenic, whence the equality $\partial_{2,J}\nabla^{\h}_J\phi=J(\partial_0+i\partial_1)\nabla^{\h}_J\phi$ follows. We are now ready for the induction step. Under the inductive hypothesis $\nabla^{(h_0,h_1,2t-1)}_J\phi=J^{2t-1}\delta^{(h_0,h_1,2t-1)}_J\phi$, we have
\begin{align*}
&\nabla^{(h_0,h_1,2t)}_J\phi=\partial_{2,J}\nabla^{(h_0,h_1,2t-1)}_J\phi=J(\partial_0+i\partial_1)\nabla^{(h_0,h_1,2t-1)}_J\phi\\
&=J(\partial_0+i\partial_1)J^{2t-1}\delta^{(h_0,h_1,2t-1)}_J\phi=J^{2t}(\partial_0-i\partial_1)\delta^{(h_0,h_1,2t-1)}_J\phi=J^{2t}\delta^{(h_0,h_1,2t)}_J\phi\,,\\
&\nabla^{(h_0,h_1,2t+1)}_J\phi=\partial_{2,J}\nabla^{(h_0,h_1,2t)}_J\phi=J(\partial_0+i\partial_1)\nabla^{(h_0,h_1,2t)}_J\phi\\
&=J(\partial_0+i\partial_1)J^{2t}\delta^{(h_0,h_1,2t)}_J\phi=J^{2t+1}(\partial_0+i\partial_1)\delta^{(h_0,h_1,2t)}_J\phi=J^{2t+1}\delta^{(h_0,h_1,2t+1)}_J\phi\,.
\end{align*}
This completes our induction step, whence our proof of the equality $\nabla^{\h}_J\phi=J^{h_2}\delta^{\h}_J\phi$.

Finally, if $f\in\reg_{(1,3)}(\Omega,\hh)$ is $\mscr{C}^{|\h|}$, then the definitions of $\delta^{\h},\delta^{\h}_J$ and the equality established for $J$-monogenic functions yield the desired equality $(\delta^{\h}f)_J=\delta^{\h}_Jf_J=J^{-h_2}\nabla^\h_Jf_J$.
\end{proof}

We are now ready to prove the announced result.

\begin{theorem}\label{thm:polynomialexpansion}
The family $\F_k$ is a basis for $U_k$. Namely, for every $P\in U_k$,
\begin{equation}\label{eq:polynomialexpansion}
P(x)=\sum_{|\k|=k}\T_\k(x)c_\k,\quad c_\k:=\frac{1}{\k!}\delta^{(0,\k)}P(0)\,.
\end{equation}
Moreover, $P$ is $(1,3)$-slice preserving under the following assumptions on $c_\k$, valid for all $\k=(k_1,k_2)$ with $|\k|=k$: $c_\k\in\cc$ when $k_2=0$; $c_\k\in\rr$ when either $k_1=0\neq k_2$ or $k_1\neq0\neq k_2\in2\nn$; $c_\k=0$ when $k_1\neq0, k_2\in2\nn+1$.
\end{theorem}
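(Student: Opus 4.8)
The plan is to restrict an arbitrary $P\in U_k$ to each slice $\rr^{2+1}_J$, apply the known expansion~\eqref{eq:Jpolynomialexpansion} of $J$-monogenic homogeneous polynomials, and glue the slicewise expansions into a single global one. First I would note that, for every $J\in\s^1$, the restriction $P_J$ belongs to $U^J_k$: since $P$ is $(1,3)$-regular, $P_J$ is $J$-monogenic, and since $P$ is $k$-homogeneous in $x_0,x_1,x_2,x_3$, the map $P_J(x_0+ix_1+\beta J)$ is $k$-homogeneous in $x_0,x_1,\beta$. Hence~\eqref{eq:Jpolynomialexpansion} gives $P_J=\sum_{|\k|=k}\P^J_\k\,a^J_\k$ with $a^J_\k=\tfrac{1}{\k!}\nabla^{(0,\k)}_J P_J(0)$.

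The core of the argument is to show that these slicewise coefficients come from a single $J$-independent quantity. By Lemma~\ref{lem:PTnabladelta}, $(\T_\k)_J=\P^J_\k J^{k_2}$ and $(\delta^{(0,\k)}P)_J=J^{-k_2}\nabla^{(0,\k)}_J P_J$. Evaluating the latter at the origin, which lies on the mirror $\cc$ and hence in every slice, I obtain $\delta^{(0,\k)}P(0)=J^{-k_2}\nabla^{(0,\k)}_J P_J(0)=J^{-k_2}\,\k!\,a^J_\k$, so that $a^J_\k=J^{k_2}c_\k$ with $c_\k=\tfrac{1}{\k!}\delta^{(0,\k)}P(0)$ independent of $J$. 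Substituting back, $P_J=\sum_{|\k|=k}\P^J_\k J^{k_2}c_\k=\sum_{|\k|=k}(\T_\k)_J\,c_\k$ on $\rr^{2+1}_J$; since $\hh=\bigcup_{J\in\s^1}\rr^{2+1}_J$ and the right-hand side is intrinsic, this yields the global identity~\eqref{eq:polynomialexpansion}. In particular $\F_k$ spans $U_k$, each $\T_\k$ itself lying in $U_k$ because $(\T_\k)_J=\P^J_\k J^{k_2}$ is $J$-monogenic by Lemma~\ref{lem:PTnabladelta}. I expect this gluing to be the main obstacle: the factor $J^{k_2}$ by which $(\T_\k)_J$ differs from the standard monogenic polynomial $\P^J_\k$ must cancel exactly against the factor $J^{k_2}$ relating $c_\k$ to $a^J_\k$, and it is precisely Lemma~\ref{lem:PTnabladelta} that guarantees this cancellation and thus the $J$-independence of $c_\k$.

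For linear independence, suppose $\sum_{|\k|=k}\T_\k c_\k\equiv0$. Restricting to a fixed slice gives $\sum_{|\k|=k}\P^J_\k(J^{k_2}c_\k)\equiv0$, and since $\{\P^J_\k\}_{|\k|=k}$ is a right $\hh$-basis of $U^J_k$ we get $J^{k_2}c_\k=0$, hence $c_\k=0$, for all $\k$. Therefore $\F_k$ is a basis.

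Finally, for the slice-preserving statement I would argue slicewise once more. The function $P$ preserves every $\rr^{2+1}_J$ exactly when each coefficient $a^J_\k=J^{k_2}c_\k$ meets the membership condition recalled after~\eqref{eq:Jpolynomialexpansion} (namely $a^J_\k\in\rr^{2+1}_J,\cc_J,\cc$ or $\rr$ according to which of $k_1,k_2$ vanish). Imposing these conditions for all $J\in\s^1$ and intersecting over the circle turns them into conditions on $c_\k$ alone. When $k_2$ is even, $J^{k_2}$ is a real scalar, so $a^J_\k$ is a real multiple of $c_\k$; this yields $c_\k\in\cc$ for $k_2=0$ and $c_\k\in\rr$ for even $k_2>0$. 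When $k_2$ is odd, $J^{k_2}=\pm J$, and the requirements become $Jc_\k\in\cc_J$ for $k_1=0$, forcing $c_\k\in\bigcap_J\cc_J=\rr$, and $Jc_\k\in\rr$ for $k_1\neq0$, forcing $c_\k\in\bigcap_J\rr J=\{0\}$. A direct verification confirms that these conditions are also sufficient. I expect the odd-$k_2$ analysis to be the delicate part, since there the $J$-dependent constraints collapse, over the whole of $\s^1$, to $c_\k\in\rr$ or $c_\k=0$.
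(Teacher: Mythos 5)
Your proposal is correct and follows essentially the same route as the paper: restrict to each slice $\rr^{2+1}_J$, use the basis $\{\P^J_\k\}_{|\k|=k}$ of $U^J_k$ together with Lemma~\ref{lem:PTnabladelta} to identify $(\T_\k)_J=\P^J_\k J^{k_2}$ and $\nabla^{(0,\k)}_JP_J(0)=J^{k_2}\delta^{(0,\k)}P(0)$, so that the factors of $J^{k_2}$ cancel and the slicewise expansions glue into~\eqref{eq:polynomialexpansion}; the linear-independence and slice-preserving arguments also match (your case analysis over even/odd $k_2$ just spells out the intersections $\cc=\bigcap_J\rr^{2+1}_J$, $\rr=\bigcap_J\cc_J$, $\{0\}=\bigcap_J J\rr$ that the paper invokes more tersely).
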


\begin{proof}
Fix $k\in \nn$. We take several steps, applying Lemma~\ref{lem:PTnabladelta} at each step.

We first prove the inclusion $\F_k\subseteq U_k$. We know that each function $\T_\k$ is a $|\k|$-homogenous polynomial. Since, for all $J\in\s^1$, the restriction $(\T_\k)_J=\P^J_\k J^{k_2}$ is $J$-monogenic, we also have $\T_\k\in\reg_{(1,3)}(\hh,\hh)$. The desired inclusion follows.

We now prove that the elements of $\F_k$ are linearly independent. For $\{c_\k\}_{|\k|=k}\subset\hh$, if $P(x):=\sum_{|\k|=k}\T_\k(x) c_k$ vanishes identically in $\hh$, then $P_J=\sum_{|\k|=k}\P^J_\k J^{k_2} c_\k$ vanishes identically in $\rr^{2+1}_J$. Since $\{\P^J_\k\}_{|\k|=k}$ is a basis of $U^J_k$, it follows that $J^{k_2}c_\k=0$ (whence $c_\k=0$) for all $\k\in\nn^2$ with $|\k|=k$.

We now prove that formula~\eqref{eq:polynomialexpansion} is true for all $P\in U_k$, whence the family $\F_k$ spans $U_k$. It suffices to prove that the polynomial function $\widetilde{P}:=\sum_{|\k|=k}\T_\k c_\k$ coincides with $P$. This is true, because for all $J\in\s^1$
\[\widetilde{P}_J=\sum_{|\k|=k}(\T_\k)_J c_\k=\sum_{|\k|=k}\P^J_\k J^{k_2} c_\k=\sum_{|\k|=k}\P^J_\k\frac{1}{\k!}\nabla^{(0,\k)}_JP_J(0)=P_J\,.\]

The final statement follows from the analogous property of $U^J_k$, taking into account that $\cc=\bigcap_{J\in\s^1}\rr^{2+1}_J$, that $\rr=\bigcap_{J\in\s^1}\cc_J$ and that $\{0\}=\bigcap_{J\in\s^1}J\rr$.
\end{proof}

Taking into account Example~\ref{ex:lowerdegreepolynomials}, we draw the following conclusion.

\begin{corollary}
For any domain $\Omega$ in $\hh$, the function space $\reg_{(1,3)}(\Omega,\hh)$ is distinct both from the space $\reg_{3}(\Omega,\hh)$ of left Fueter-regular functions and from the space $\reg_{(0,3)}(\Omega,\hh)$ of slice-regular quaternionic functions.
\end{corollary}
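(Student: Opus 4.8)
The plan is to exhibit, for an arbitrary domain $\Omega$, explicit functions that lie in $\reg_{(1,3)}(\Omega,\hh)$ but not in the respective comparison classes, drawing on the degree-one polynomials furnished by Theorem~\ref{thm:polynomialexpansion} together with the non-regularity assertions of Example~\ref{ex:lowerdegreepolynomials}. By Theorem~\ref{thm:polynomialexpansion}, both $\T_{(1,0)}$ and $\T_{(0,1)}$ belong to $U_1\subseteq\reg_{(1,3)}(\hh,\hh)$. Since $(1,3)$-regularity is defined slice by slice as the vanishing of the operators $\debar_J$, it is inherited under restriction to open subsets; hence $\T_{(1,0)}|_\Omega$ and $\T_{(0,1)}|_\Omega$ lie in $\reg_{(1,3)}(\Omega,\hh)$. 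It then remains to show that these restrictions are not, respectively, slice-regular and Fueter-regular, and --- crucially --- that this failure persists on \emph{every} domain, not merely on $\hh$.

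For the comparison with $\reg_3(\Omega,\hh)$ I would use $\T_{(0,1)}(x)=x_0+jx_2+kx_3$. Applying the left Cauchy-Riemann-Fueter operator $\partial_{x_0}+i\partial_{x_1}+j\partial_{x_2}+k\partial_{x_3}$ produces the nowhere-vanishing constant $1+j^2+k^2=-1$. Thus $\T_{(0,1)}|_\Omega$ fails to be Fueter-regular on any nonempty open $\Omega$, so $\reg_{(1,3)}(\Omega,\hh)\neq\reg_3(\Omega,\hh)$. This case is immediate, the obstruction being a single global constant.

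For the comparison with $\reg_{(0,3)}(\Omega,\hh)$ I would use $\T_{(1,0)}(x)=x_1-ix_0$ and compute its slice Cauchy-Riemann defect on a general slice $\cc_J$, $J=J_1i+J_2j+J_3k\in\s_\hh$. Parametrizing $\cc_J\ni x=\alpha+\gamma J$ (so $x_0=\alpha$, $x_1=J_1\gamma$) gives $(\T_{(1,0)})_J=J_1\gamma-i\alpha$ and
\[(\partial_\alpha+J\partial_\gamma)(\T_{(1,0)})_J=-i+J_1J=(J_1^2-1)i+J_1J_2j+J_1J_3k,\]
which is constant on each slice and vanishes exactly for $J=\pm i$. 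A direct check shows that $\Omega$ meets $\cc_J$ for all $J$ in a nonempty open subset of the $2$-sphere $\s_\hh$ --- namely the image of the open set $\Omega\setminus\rr$ under the open map $x\mapsto\im(x)/|\im(x)|$ --- and such a subset necessarily contains directions $J\neq\pm i$. For any such $J$ the slice $\cc_J$ meets $\Omega$, while $(\T_{(1,0)})_J$ is nowhere holomorphic there; hence $\T_{(1,0)}|_\Omega$ is not slice-regular and $\reg_{(1,3)}(\Omega,\hh)\neq\reg_{(0,3)}(\Omega,\hh)$.

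The main obstacle is exactly this persistence statement for slice-regularity. Unlike Fueter-regularity, slice-regularity is not the kernel of a single constant-coefficient operator on $\hh$, but a family of holomorphicity conditions indexed by the slices $\cc_J$, so there is no global defect to read off at once. The decisive point is that every domain, however small, must intersect an open family of slices, hence at least one slice $\cc_J$ with $J\neq\pm i$ on which the holomorphicity defect of $\T_{(1,0)}$ is a nonzero constant; the slicewise constancy of that defect is what makes it nowhere-vanishing on $\Omega\cap\cc_J$ and lets us bypass any further identity-principle argument.
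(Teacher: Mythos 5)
Your proof is correct and takes essentially the same route as the paper, whose entire argument is the invocation of Example~\ref{ex:lowerdegreepolynomials}: the degree-one polynomials $\T_{(0,1)}$ and $\T_{(1,0)}$ are $(1,3)$-regular yet fail Fueter-regularity and slice-regularity, respectively. You simply make explicit what the paper leaves implicit, namely the defect computations (the constant $1+j^2+k^2=-1$ for the Cauchy--Riemann--Fueter operator, and the slicewise-constant defect $-i+J_1J$ vanishing only for $J=\pm i$) and the fact that these obstructions persist on every subdomain $\Omega$.
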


We point out that we could not have used the $J$-monogenic polynomials $\P^J_\k$ to construct a well-defined $(1,3)$-regular function because  $\P^{-J}_\k(x_0+ix_1-\beta(-J))=(-1)^{k_2}\P^J_\k(x_0+ix_1+\beta J)$. Nor could we have used $\nabla^\h_J$ to construct an operator on $(1,3)$-regular functions, because $(\nabla^\h_{-J}\phi)(x_0+ix_1-\beta(-J))=(-1)^{h_2}(\nabla^\h_J\phi)(x_0+ix_1+\beta J)$. We now prove that the $\T_\k$'s are strongly $(1,3)$-regular.

\begin{proposition}\label{prop:AkandBk}
For all $\k\in\zz^2$, we have $\T_\k\in\sr(\hh,\hh)$. Indeed, there exist (real) polynomial functions $A_\k,B_\k:\cc\times\rr\to\cc$ such that
\[\T_\k(x)=A_\k\left(x_0+ix_1,x_2^2+x_3^3\right)+(jx_2+kx_3)B_\k\left(x_0+ix_1,x_2^2+x_3^3\right)\]
for all $x=x_0+ix_1+jx_2+kx_3\in\hh$. Moreover, both $(x_0,x_1,\beta)\mapsto A_\k(x_0+ix_1,\beta^2)$ and $(x_0,x_1,\beta)\mapsto\beta B_\k(x_0+ix_1,\beta^2)$) are $|\k|$-homogeneous. As a consequence: $\T_\k$ preserves $\cc$; the modulus $|\T_\k(x_0+ix_1+jx_2+kx_3)|$ depends on $\k,x_0,x_1,x_2^2+x_3^3$ but is independent of the imaginary unit $\frac{jx_2+kx_3}{\sqrt{x_2^2+x_3^3}}\in\s^1$.
\end{proposition}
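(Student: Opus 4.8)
The plan is to decouple the two assertions. Since Theorem~\ref{thm:polynomialexpansion} already gives $\T_\k\in U_k\subseteq\reg_{(1,3)}(\hh,\hh)$, and since $\sr(\hh,\hh)=\slice(\hh,\hh)\cap\reg_{(1,3)}(\hh,\hh)$, the membership $\T_\k\in\sr(\hh,\hh)$ will follow once I exhibit $\T_\k$ as a $(1,3)$-function. Writing $z:=x_0+ix_1\in\cc$, $w:=jx_2+kx_3\in j\rr+k\rr$ and $r:=x_2^2+x_3^2=|w|^2$, the claimed closed form $\T_\k(x)=A_\k(z,r)+wB_\k(z,r)$ with $A_\k,B_\k$ valued in $\cc$ \emph{is} essentially the statement that $\T_\k=\I(F)$ for the stem pair $F_\emptyset(z,\beta):=A_\k(z,\beta^2)$, $F_1(z,\beta):=\beta B_\k(z,\beta^2)$; so the heart of the matter is to produce $A_\k,B_\k$. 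I would do this by induction on $|\k|$ using the recursion defining $\T_\k$, calling a map \emph{admissible} if it has the form $A(z,r)+wB(z,r)$ with $A,B$ polynomial $\cc\times\rr\to\cc$.

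The inductive step rests on three elementary identities in $\hh$: the subspaces $\cc$ and $j\rr+k\rr$ are orthogonal; $cw=w\overline{c}$ for every $c\in\cc$ (because $i$ anticommutes with $j$ and $k$); and $w^2=-r$. First, right multiplication of an admissible $A+wB$ by a complex number $c$ gives $Ac+w(Bc)$, which is again admissible; this already covers the first summand of the recursion, where the factor $x_1-(-1)^{k_2}ix_0$ lies in $\cc$, so the parity sign $(-1)^{k_2}$ is harmless. Second, and this is the only genuinely quaternionic computation, right multiplication by $x_0+w$ yields
\[(A+wB)(x_0+w)=\big(Ax_0-r\overline{B}\big)+w\big(\overline{A}+Bx_0\big),\]
where the term $wBw=w^2\overline{B}=-r\overline{B}$ is exactly the one that feeds back into the $\cc$-component; since $x_0=\re z$ and $r$ are real and $A\mapsto\overline{A}$ preserves $\cc$-valued polynomials in $(x_0,x_1,r)$, admissibility is preserved. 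As the base case $\T_{(0,0)}\equiv1$ is admissible, and $\T_\k\equiv0$ trivially is for $k_1<0$ or $k_2<0$, the induction delivers $A_\k,B_\k$ for all $\k$. I expect this bookkeeping of the $x_0+w$ product — in particular recognizing that $wBw$ returns to $\cc$ — to be the main (if modest) obstacle.

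For homogeneity I would invoke uniqueness: since $A_\k(z,r)\in\cc$ and $wB_\k(z,r)\in j\rr+k\rr$ are the two orthogonal components of $\T_\k(x)$, and $\T_\k$ is $|\k|$-homogeneous in $(x_0,x_1,x_2,x_3)$, comparing the two components under $x\mapsto\lambda x$ (which sends $z\mapsto\lambda z$, $w\mapsto\lambda w$, $r\mapsto\lambda^2 r$) forces $A_\k(\lambda z,\lambda^2 r)=\lambda^{|\k|}A_\k(z,r)$ and $\lambda B_\k(\lambda z,\lambda^2 r)=\lambda^{|\k|}B_\k(z,r)$; substituting $r=\beta^2$ gives the asserted $|\k|$-homogeneity of $(x_0,x_1,\beta)\mapsto A_\k(z,\beta^2)$ and of $(x_0,x_1,\beta)\mapsto\beta B_\k(z,\beta^2)$. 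Finally the two consequences are immediate: setting $x_2=x_3=0$ (so $w=0$, $r=0$) gives $\T_\k(z)=A_\k(z,0)\in\cc$, so $\T_\k$ preserves $\cc$; and orthogonality together with multiplicativity of the quaternionic modulus yields $|\T_\k(x)|^2=|A_\k(z,r)|^2+r\,|B_\k(z,r)|^2$, which depends only on $z$ and $r=x_2^2+x_3^2$, hence is independent of the imaginary unit $w/\sqrt{r}\in\s^1$. It remains to confirm the parity constraints — $F_\emptyset$ even and $F_1$ odd in $\beta$ — which hold because $A_\k(z,\beta^2)$ and $\beta B_\k(z,\beta^2)$ depend on $\beta$ only through $\beta^2$ and $\beta$ respectively, completing the verification that $\T_\k=\I(F)\in\slice(\hh,\hh)$.
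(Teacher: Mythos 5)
Your proposal is correct and follows essentially the same route as the paper: the paper's proof simply writes down the recursive definitions of $A_\k,B_\k$ mirroring the recursion for $\T_\k$, and your computation $(A+wB)(x_0+w)=(Ax_0-r\overline{B})+w(\overline{A}+Bx_0)$ (using $cw=w\overline{c}$ and $w^2=-r$) is exactly the algebra that produces and justifies those formulas. Your additional verifications of homogeneity and of the two consequences, which the paper leaves implicit, are also sound (note only that the statement's $x_2^2+x_3^3$ is a typo for $x_2^2+x_3^2$, as your computation correctly assumes).
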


\begin{proof}
If suffices to set $A_\k:\equiv0\equiv:B_\k$ for all $\k=(k_1,k_2)\in\zz^2$ with $k_1<0$ or $k_2<0$, as well as $A_{(0,0)}:\equiv1,B_{(0,0)}:\equiv0$ and, for $\k\in\nn^2\setminus\{(0,0)\}$,
\begin{align*}
|\k|A_\k(x_0+ix_1,\gamma)&:=k_1A_{(k_1-1,k_2)}(x_1-(-1)^{k_2}ix_0)+k_2A_{(k_1,k_2-1)}x_0-k_2\overline{B_{(k_1,k_2-1)}}\gamma\,,\\
|\k|B_\k(x_0+ix_1,\gamma)&:=k_1B_{(k_1-1,k_2)}(x_1-(-1)^{k_2}ix_0)+k_2\overline{A_{(k_1,k_2-1)}}+k_2B_{(k_1,k_2-1)}x_0\,.\qedhere
\end{align*}
\end{proof}


\subsection{Integral and series representation, identity principle}

In this subsection, our first aim is providing an integral representation formula for $(1,3)$-regular quaternionic functions. We need the notation $B_J(y_0,R):=B(y_0,R)\cap\rr^{2+1}_J$, valid only for $y_0\in\rr^{2+1}_J$ and for $R>0$. We begin with Cauchy's integral formula for $J$-monogenic functions, see~\cite[Theorem 7.12]{librogurlebeck2} (which also specifies the hypersurface integrals considered here). The subsequent proposition follows at once.

\begin{remark}
Let $J\in\s^1,y_0\in\rr^{2+1}_J, R>0$ and $B_J:=B_J(y_0,R)$. If $\phi\in\mon_J(V,\hh)$ for some open neighborhood $V$ of the closure of $B_J$ in $\rr^{2+1}_J$, then for all $x_0+ix_1+\beta J\in B_J$
\begin{equation}\label{eq:cauchyintegralJmonogenic}
\phi(x_0+ix_1+\beta J)=\frac1{4\pi}\int_{\partial B_J}\frac{\overline{y-x_0-ix_1-\beta J}}{|y-x_0-ix_1-\beta J|^{2+1}}dy^*\phi(y)\,.
\end{equation}
\end{remark}

\begin{proposition}
Let $\Omega$ be a domain in $\hh$ and let $f:\Omega\to\hh$ be a $(1,3)$-regular function. If $J\in\s^1,y_0\in\Omega_J,R>0$ are such that $\Omega$ contains the closure of $B_J:=B_J(y_0,R)$, then for all $x\in B_J$
\begin{equation}\label{eq:cauchyintegral}
f(x)=\frac1{4\pi}\int_{\partial B_J}\frac{\overline{y-x}}{|y-x|^{2+1}}dy^*f_J(y)\,.
\end{equation}
\end{proposition}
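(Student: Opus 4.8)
The plan is to deduce the integral formula~\eqref{eq:cauchyintegral} directly from Cauchy's integral formula~\eqref{eq:cauchyintegralJmonogenic} for $J$-monogenic functions. The key observation is that $(1,3)$-regularity is, by definition, exactly the requirement that for every $J\in\s^1$ the restriction $f_J:\Omega_J\to\hh$ be $J$-monogenic on the open subset $\Omega_J=\Omega\cap\rr^{2+1}_J$ of $\rr^{2+1}_J$. So the strategy is to fix the given $J\in\s^1$, identify the ball $B_J=B_J(y_0,R)$ as a ball inside the hyperplane $\rr^{2+1}_J$, and apply the $J$-monogenic Cauchy formula to the function $\phi:=f_J$.

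First I would verify the hypotheses of the $J$-monogenic Cauchy formula are met. Since $\Omega$ contains the closure of $B_J$, the intersection $V:=\Omega_J=\Omega\cap\rr^{2+1}_J$ is an open neighbourhood of the closure of $B_J$ taken within $\rr^{2+1}_J$, and $f_J\in\mon_J(V,\hh)$ because $f$ is $(1,3)$-regular. Writing $x=x_0+ix_1+\beta J\in B_J\subseteq\rr^{2+1}_J$, formula~\eqref{eq:cauchyintegralJmonogenic} applied to $\phi=f_J$ gives
\[
f_J(x)=\frac1{4\pi}\int_{\partial B_J}\frac{\overline{y-x_0-ix_1-\beta J}}{|y-x_0-ix_1-\beta J|^{2+1}}\,dy^*\,f_J(y)\,.
\]
The second step is purely a matter of rewriting: since $x\in B_J\subseteq\Omega_J$ we have $f(x)=f_J(x)$ by the very definition of the restriction, and the abbreviation $x=x_0+ix_1+\beta J$ lets us replace every occurrence of $x_0+ix_1+\beta J$ in the integrand by $x$, yielding precisely~\eqref{eq:cauchyintegral}.

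There is essentially no obstacle here, as the excerpt itself announces that ``the subsequent proposition follows at once'' from the preceding remark. The only point deserving a word of care is the identification of $B_J(y_0,R)=B(y_0,R)\cap\rr^{2+1}_J$ as genuinely a ball in the hyperplane $\rr^{2+1}_J$ with respect to the norm induced there, so that the hypersurface integral over $\partial B_J$ matches the one in~\eqref{eq:cauchyintegralJmonogenic}; but this is immediate from the notation $B_J(y_0,R)$ introduced just before the remark. Thus the proof amounts to the single sentence that~\eqref{eq:cauchyintegral} is~\eqref{eq:cauchyintegralJmonogenic} applied to $\phi=f_J$, read through the identity $f_{|_{B_J}}=(f_J)_{|_{B_J}}$.
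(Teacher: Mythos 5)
Your proof is correct and matches the paper's approach exactly: the paper offers no written proof beyond the assertion that the proposition ``follows at once'' from the preceding remark, and your argument---that $\Omega_J$ is an open neighbourhood in $\rr^{2+1}_J$ of the closure of $B_J$, that $f_J\in\mon_J(\Omega_J,\hh)$ by the definition of $(1,3)$-regularity, and that the claimed formula is the $J$-monogenic Cauchy formula applied to $\phi=f_J$ together with $f(x)=f_J(x)$ for $x\in B_J$---is precisely the intended one-line deduction, spelled out.
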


Our next aim is expanding $(1,3)$-regular quaternionic functions into polynomial series at every point of the mirror $\cc$. To do so, we rely on a well-known property of monogenic functions, described in~\cite[Theorem 9.24]{librogurlebeck2}:

\begin{remark}\label{rmk:seriesexpansionofJmonogenic}
Let $J\in\s^1,R>0,B_J:=B_J(0,R),\phi\in\mon_J(B_J,\hh)$. Then 
\begin{equation}\label{eq:seriesexpansionofJmonogenic}
\phi=\sum_{k\in\nn}\sum_{|\k|=k}\P^J_\k\frac{1}{\k!}\nabla^{(0,\k)}_J\phi(0)\,.
\end{equation}
Moreover, the series in formula~\eqref{eq:seriesexpansionofJmonogenic} converges absolutely and uniformly on compact sets in $B_J$. Now, fix a connected open subset $V$ of $\rr^{2+1}_J$ and $\phi\in\mon_J(V,\hh)$. Then $\phi$ is real analytic because for every $B_J(y_0,R)\subseteq V$ formula~\eqref{eq:seriesexpansionofJmonogenic} applies to $y\mapsto\phi_{|_{B_J(y_0,R)}}(y+y_0)$. In particular: if $\phi'\in\mon_J(V,\hh)$ coincides with $\phi$ on a subset of $V$ whose interior is not empty (or, more generally, whose Hausdorff dimension is greater than $2$), then $\phi=\phi'$ throughout $V$.
\end{remark}

\begin{theorem}
Let $\Omega$ be a domain in $\hh$, including the open ball $B(z_0,R)$ for some $z_0\in\cc$ and some $R>0$. If $f:\Omega\to\hh$ is a $(1,3)$-regular function, then
\begin{equation}\label{eq:seriesexpansion}
f(x)=\sum_{k\in\nn}\sum_{|\k|=k}\T_\k(x-z_0)c_\k,\quad c_\k:=\frac{1}{\k!}\delta^{(0,\k)}f(z_0)
\end{equation}
for all $x\in B(z_0,R)$. Moreover, the series on the right-hand side of formula~\eqref{eq:seriesexpansion} converges absolutely and uniformly on compact sets in $B(z_0,R)$. Finally, $f$ is $(1,3)$-slice preserving if we make the following assumptions: $c_\k\in\cc$ when $k_2=0$; $c_\k\in\rr$ when either $k_1=0\neq k_2$ or $k_1\neq0, k_2\in2\nn$; $c_\k=0$ when $k_1\neq0, k_2\in2\nn+1$.
\end{theorem}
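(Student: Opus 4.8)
The plan is to transfer the known series expansion of $J$-monogenic functions (Remark~\ref{rmk:seriesexpansionofJmonogenic}) onto each slice $\rr^{2+1}_J$ and then reassemble the slices into a genuine quaternionic identity on the ball. First I would reduce to the case $z_0=0$: by Remark~\ref{rmk:complextranslation}, $g(x):=f(x+z_0)$ is $(1,3)$-regular on $B(0,R)$ and is $(1,3)$-slice preserving exactly when $f$ is, while $\delta^{(0,\k)}g(0)=\delta^{(0,\k)}f(z_0)$ because $\delta^{(0,\k)}$ is a constant-coefficient operator in $x_0,x_1$. So it suffices to expand $g$ around $0$ and translate back at the end.

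Next, fix $J\in\s^1$. Since $g\in\reg_{(1,3)}(B(0,R),\hh)$, its restriction $g_J$ lies in $\mon_J(B_J(0,R),\hh)$, so formula~\eqref{eq:seriesexpansionofJmonogenic} gives $g_J=\sum_{k}\sum_{|\k|=k}\P^J_\k\,\frac1{\k!}\nabla^{(0,\k)}_Jg_J(0)$. The key algebraic step is to rewrite the coefficients in a $J$-free way. By Lemma~\ref{lem:PTnabladelta}, $(\delta^{(0,\k)}g)_J=J^{-k_2}\nabla^{(0,\k)}_Jg_J$; evaluating at $0\in\cc$, where the left-hand side is independent of $J$, yields $\nabla^{(0,\k)}_Jg_J(0)=J^{k_2}\delta^{(0,\k)}g(0)=\k!\,J^{k_2}c_\k$. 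Substituting and using $(\T_\k)_J=\P^J_\k J^{k_2}$ (again Lemma~\ref{lem:PTnabladelta}) gives $g_J=\sum_k\sum_{|\k|=k}(\T_\k)_J c_\k$, i.e.\ the claimed identity holds after restriction to $\rr^{2+1}_J$. Since every $x\in B(0,R)$ lies on some slice $\rr^{2+1}_J$ and the $c_\k$ do not depend on $J$, these slice-wise identities glue into the pointwise equality $g=\sum_k\sum_{|\k|=k}\T_\k c_\k$ on $B(0,R)$; translating back produces~\eqref{eq:seriesexpansion}.

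I expect the convergence claim to be the main obstacle, since Remark~\ref{rmk:seriesexpansionofJmonogenic} only furnishes absolute, locally uniform convergence on the three-dimensional slices $B_J(0,R)$, whereas I need it on compact subsets of the four-dimensional ball. The device that resolves this is Proposition~\ref{prop:AkandBk}: because $|\T_\k(x)|$ depends only on $\k,x_0,x_1$ and $x_2^2+x_3^2$ and is independent of the imaginary unit $\frac{jx_2+kx_3}{\sqrt{x_2^2+x_3^2}}$, one has $|\T_\k(x)c_\k|=\bigl|\P^J_\k(x_0+ix_1+\beta J)\tfrac1{\k!}\nabla^{(0,\k)}_Jg_J(0)\bigr|$ for $\beta=\sqrt{x_2^2+x_3^2}$ and any $J\in\s^1$ (note $|J^{k_2}|=1$). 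A compact $K\subseteq B(0,R)$ maps under $x\mapsto(x_0+ix_1,\sqrt{x_2^2+x_3^2})$ to a compact subset of the closed half-disk $\{\beta\ge0\}$ of $B_J(0,R)$; fixing one slice and invoking the locally uniform, absolute convergence of the $J$-monogenic series there bounds $\sum_k\sum_{|\k|=k}|\T_\k(x)c_\k|$ uniformly over $K$. This yields absolute and uniform convergence on compact subsets of $B(0,R)$, hence of $B(z_0,R)$.

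Finally, for the slice-preserving statement I would argue on partial sums. Under the stated conditions on the $c_\k$, Theorem~\ref{thm:polynomialexpansion} guarantees that each homogeneous truncation $\sum_{|\k|=k}\T_\k c_\k$ is $(1,3)$-slice preserving, so every partial sum of~\eqref{eq:seriesexpansion} maps $\rr^{2+1}_J\cap B(z_0,R)$ into $\rr^{2+1}_J$ (using $z_0\in\cc\subseteq\rr^{2+1}_J$, so that $x-z_0\in\rr^{2+1}_J$). Since $\rr^{2+1}_J$ is closed and the partial sums converge to $f$ pointwise, $f(x)\in\rr^{2+1}_J$ for every $x\in\rr^{2+1}_J\cap B(z_0,R)$, which is the desired conclusion.
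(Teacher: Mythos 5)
Your proposal is correct and follows essentially the same route as the paper's own proof: reduce to $z_0=0$ by translation, convert the $J$-monogenic expansion~\eqref{eq:seriesexpansionofJmonogenic} of each $f_J$ into the $\T_\k$-series via Lemma~\ref{lem:PTnabladelta}, obtain uniform absolute convergence on compact subsets of the four-dimensional ball from the slice-wise convergence using the modulus invariance of Proposition~\ref{prop:AkandBk}, and deduce the slice-preserving statement from its polynomial analog in Theorem~\ref{thm:polynomialexpansion}. The only cosmetic differences are that the paper passes through the compact $(1,3)$-symmetric completion $\widetilde{C}$ where you use the map $x\mapsto(x_0+ix_1,\sqrt{x_2^2+x_3^2})$, and that you spell out the closedness-of-$\rr^{2+1}_J$ argument the paper leaves implicit.
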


\begin{proof}
We assume $z_0=0$, without loss of generality because Remark~\ref{rmk:complextranslation} allows us to precompose our $(1,3)$-regular function $f$ with the translation $x\mapsto x+z_0$. Let us consider on $B:=B(0,R)$ the series
\[\widetilde{f}(x):=\sum_{k\in\nn}\sum_{|\k|=k}\T_\k(x)c_\k,\quad c_\k:=\frac{1}{\k!}\delta^{(0,\k)}f(0)\,.\]
For every compact subset $C$ of $B$, its symmetric completion $\widetilde{C}$ is also a compact subset of $B$. If we fix $J_0\in\s^1$ and take any $x=x_0+ix_1+\beta J\in \widetilde{C}$, then
\[\left\vert\T_\k(x) c_\k\right\vert=\left\vert\T_\k(x)\right\vert\vert c_\k\vert=\left\vert\T_\k(x_0+ix_1+\beta J_0)\right\vert\vert c_\k\vert
=\left\vert\P^{J_0}_\k(x_0+ix_1+\beta J_0)\right\vert\left\vert\frac{1}{\k!}\nabla^{(0,\k)}_{J_0}f_{J_0}(0)\right\vert\]
for all $\k\in\nn^2$. For the second and third equalities, we have used Proposition~\ref{prop:AkandBk} and Lemma~\ref{lem:PTnabladelta}, respectively. The series $\sum_{k\in\nn}\sum_{|\k|=k}\P^{J_0}_\k(x_0+ix_1+\beta J_0)\frac{1}{\k!}\nabla^{(0,\k)}_{J_0}f_{J_0}(0)$ is the expansion~\eqref{eq:seriesexpansionofJmonogenic} of $f_{J_0}$: this expansion converges absolutely and uniformly to $f_{J_0}$ on each compact subset of $B_{J_0}$, including the $J_0$-slice $(\widetilde{C})_{J_0}$ of $\widetilde{C}$. It follows at once that the series $\widetilde{f}(x)$ converges absolutely and uniformly for $x\in C$. Its sum defines a function $\widetilde{f}:B\to\hh$. For every $J\in\s^1$,
\begin{align*}
\widetilde{f}_J=\sum_{k\in\nn}\sum_{|\k|=k}(\T_\k)_Jc_\k=\sum_{k\in\nn}\sum_{|\k|=k}\P^J_\k\frac{1}{\k!}\nabla^{(0,\k)}_Jf_J(0)=(f_J)_{|_{B_J}}\,,
\end{align*}
thanks to Lemma~\ref{lem:PTnabladelta} and formula~\eqref{eq:seriesexpansionofJmonogenic}. Thus, $\widetilde{f}$ coincides with $f_{|_B}$ and formula~\eqref{eq:seriesexpansion} is proven. The final statement follows from its analog in Theorem~\ref{thm:polynomialexpansion}.
\end{proof}

Besides its independent interest, formula~\eqref{eq:seriesexpansion} serves to prove the following principle, valid only on $(1,3)$-slice domains.

\begin{theorem}[Identity Principle]\label{thm:identityprinciple}
Let $\Omega$ be a $(1,3)$-slice domain in $\hh$ and let $f,g:\Omega\to\hh$ be $(1,3)$-regular functions. If there exist $J_0\in\s^1$ and subset of $\Omega_{J_0}$ whose interior is not empty (or whose Hausdorff dimension is greater than $2$) where $f$ and $g$ coincide, then $f=g$ throughout $\Omega$.
\end{theorem}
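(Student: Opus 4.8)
The plan is to set $h:=f-g$, which lies in $\reg_{(1,3)}(\Omega,\hh)$ by the right $\hh$-module structure recorded in Remark~\ref{rmk:complextranslation}, and to prove $h\equiv0$. The governing difficulty is that the identity principle for $J$-monogenic functions (Remark~\ref{rmk:seriesexpansionofJmonogenic}) only propagates vanishing \emph{within a single slice} $\Omega_J$: two distinct slices $\rr^{2+1}_J$ and $\rr^{2+1}_{J'}$ meet only along the two-dimensional mirror $\cc$, which is too small to re-enter that principle, since it requires a set of Hausdorff dimension greater than $2$. The device that crosses between slices will be the series expansion~\eqref{eq:seriesexpansion} at a mirror point.

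First I would settle the distinguished slice. As $\Omega$ is a $(1,3)$-slice domain, $\Omega_{J_0}$ is a connected open subset of $\rr^{2+1}_{J_0}$ and $h_{J_0}$ is $J_0$-monogenic there. By hypothesis $h_{J_0}$ vanishes on a subset of $\Omega_{J_0}$ of nonempty interior (or Hausdorff dimension greater than $2$), so the identity principle of Remark~\ref{rmk:seriesexpansionofJmonogenic} yields $h_{J_0}\equiv0$ on all of $\Omega_{J_0}$.

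Next I would upgrade this to vanishing on a full three-dimensional ball, which is the heart of the argument. Since $\Omega$ is a slice domain it meets the mirror; fix $z_0\in\Omega\cap\cc$ and $R>0$ with $B(z_0,R)\subseteq\Omega$. Applying the series expansion~\eqref{eq:seriesexpansion} to $h$ on $B(z_0,R)$ gives coefficients $c_\k=\frac1{\k!}\delta^{(0,\k)}h(z_0)$. The crucial observation is that these coefficients are read off the single slice $\Omega_{J_0}$: because $z_0\in\cc\subseteq\rr^{2+1}_{J_0}$, Lemma~\ref{lem:PTnabladelta} gives $\delta^{(0,\k)}h(z_0)=(\delta^{(0,\k)}h)_{J_0}(z_0)=J_0^{-k_2}\nabla^{(0,\k)}_{J_0}h_{J_0}(z_0)$, and every partial derivative of $h_{J_0}$ vanishes at $z_0$ since $h_{J_0}\equiv0$ on the open set $\Omega_{J_0}$. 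Hence all $c_\k=0$, so $h\equiv0$ on $B(z_0,R)$.

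Finally I would propagate to every slice and conclude. For arbitrary $J\in\s^1$, the intersection $B(z_0,R)\cap\rr^{2+1}_J=B_J(z_0,R)$ is a nonempty open subset of $\Omega_J$ on which the $J$-monogenic function $h_J$ vanishes; since $\Omega_J$ is connected, Remark~\ref{rmk:seriesexpansionofJmonogenic} forces $h_J\equiv0$ on $\Omega_J$. As every $x\in\Omega$ decomposes as $x=z+\beta J$ and thereby lies in some slice $\Omega_J$, we obtain $h\equiv0$ on $\Omega$, i.e.\ $f=g$. The main obstacle is precisely the cross-slice passage: one must recognize that the expansion~\eqref{eq:seriesexpansion} reconstructs the entire germ of $h$ at $z_0$ out of data living on a single slice, thereby converting one-slice vanishing into ball vanishing and thence into vanishing on every slice.
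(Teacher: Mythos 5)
Your proposal is correct and follows essentially the same route as the paper's proof: use the one-slice identity principle for $J_0$-monogenic functions to get agreement on all of $\Omega_{J_0}$, read off the coefficients of the expansion~\eqref{eq:seriesexpansion} at a mirror point $z_0$ from that single slice via Lemma~\ref{lem:PTnabladelta}, deduce agreement on a ball $B(z_0,R)$, and propagate to every slice by real analyticity and connectedness of $\Omega_J$. The only cosmetic difference is that you work with $h=f-g$ while the paper compares $f$ and $g$ directly.
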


\begin{proof}
Since $f,g$ are $(1,3)$-regular functions on $\Omega$, for all $J\in\s^1$ we have $f_J,g_J\in\mon_J(\Omega_J,\hh)$ and we can apply Remark~\ref{rmk:seriesexpansionofJmonogenic}. Our hypotheses yield that $f_{J_0}$ and $g_{J_0}$ coincide in their connected domain $\Omega_{J_0}$. Since $\Omega$ is a slice domain, there exist $z_0\in\cc, R>0$ such that the open ball $B:=B(z_0,R)$ is contained in $\Omega$. We have
\[\delta^{(0,\k)}f(z_0)=J_0^{-k_2}\nabla^{(0,\k)}_{J_0}f_{J_0}(z_0)=J_0^{-k_2}\nabla^{(0,\k)}_{J_0}g_{J_0}(z_0)=\delta^{(0,\k)}g(z_0)\,.\]
This information, when performing expansion~\eqref{eq:seriesexpansion} at $z_0$ for both $f$ and $g$, yields $f_{|_B}=g_{|_B}$. In particular, for every $J\in\s^1$ the real analytic functions $f_J$ and $g_J$ coincide in $B_J$, whence in their connected domain $\Omega_J$. We conclude that $f=g$ throughout $\Omega$, as announced.
\end{proof}


\subsection{Representation formula}

We now aim at proving that if a $(1,3)$-symmetric set $\Omega:=\Omega_D$ is a $(1,3)$-slice domain then $\reg_{(1,3)}(\Omega,\hh)=\sr(\Omega,\hh)$, i.e., all $(1,3)$-regular functions on $\Omega$ are strongly $(1,3)$-regular. To this end, we establish the next property.

\begin{theorem}[General Representation Formula]
Assume $\Omega:=\Omega_D$ to be a $(1,3)$-slice domain and let $f\in\reg_{(1,3)}(\Omega,\hh)$. For all $I,J,K\in\s^1$ with $J\neq K$ and for all $z+\beta I\in\Omega$,
\begin{align}\label{eq:generalrepresentationformula}
f(z+\beta I)&=(J-K)^{-1}(Jf(z+\beta J)-Kf(z+\beta K))+I(J-K)^{-1}(f(z+\beta J)-f(z+\beta K))\notag\\
&=\left((J-K)^{-1}J+I(J-K)^{-1}\right)f(z+\beta J)-\left((J-K)^{-1}K+I(J-K)^{-1}\right)f(z+\beta K)\,.
\end{align}
In particular,
\begin{align}\label{eq:representationformula}
f(z+\beta I)&=\frac{1}{2}(f(z+\beta J)+f(z-\beta J))+I\frac{J}{2}(f(z-\beta J)-f(z+\beta J))\\
&=\frac{1-IJ}{2}f(z+\beta J)+\frac{1+IJ}{2}f(z-\beta J)\,.\notag
\end{align}
\end{theorem}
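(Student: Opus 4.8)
The plan is to exploit the fact that, for a fixed complex point $z\in\cc$ and a fixed real radius $\beta$, the value $f(z+\beta I)$ depends on the imaginary unit $I\in\s^1$ in a very controlled way. Specifically, since $f$ is $(1,3)$-regular on the $(1,3)$-slice domain $\Omega=\Omega_D$, I would first establish that $f$ is \emph{affine} along each circle $\s_x$, in the sense that there exist quaternions $a=a(z,\beta)$ and $b=b(z,\beta)$ such that $f(z+\beta I)=a+Ib$ for every $I\in\s^1$. This is the crux of the matter: the whole formula is just linear algebra once this affine dependence is known.

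To obtain the affine dependence, I would invoke the preceding results on strong $(1,3)$-regularity and series expansion. By the series expansion theorem (formula~\eqref{eq:seriesexpansion}), near any point $z_0\in\cc$ the function $f$ equals $\sum_{k}\sum_{|\k|=k}\T_\k(x-z_0)c_\k$, and by Proposition~\ref{prop:AkandBk} each $\T_\k(x)=A_\k(z,\beta^2)+(jx_2+kx_3)B_\k(z,\beta^2)$, where $jx_2+kx_3=\beta I$. Hence each term, and therefore the local sum, has the form $A(z,\beta)+\beta I\,B(z,\beta)=a+Ib$ with $a,b$ independent of $I$. Because $\Omega$ is a $(1,3)$-slice domain (it meets the mirror $\cc$ and each slice $\Omega_J$ is connected), the Identity Principle, Theorem~\ref{thm:identityprinciple}, propagates this affine structure from a neighborhood of the mirror to all of $\Omega$: the function $(z,\beta,I)\mapsto a(z,\beta)+Ib(z,\beta)$ is $(1,3)$-regular and agrees with $f$ on an open subset of some slice, so it equals $f$ everywhere on $\Omega$. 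This gives $f(z+\beta I)=a+Ib$ globally, with $a,b$ depending only on $(z,\beta)$.

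With the representation $f(z+\beta I)=a+Ib$ in hand, the formula becomes pure algebra. Evaluating at $I=J$ and $I=K$ yields the linear system $f(z+\beta J)=a+Jb$ and $f(z+\beta K)=a+Kb$. Subtracting gives $f(z+\beta J)-f(z+\beta K)=(J-K)b$, so $b=(J-K)^{-1}(f(z+\beta J)-f(z+\beta K))$ (here $J-K\neq0$ is invertible in $\hh$ since $J\neq K$). Similarly $Jf(z+\beta J)-Kf(z+\beta K)=(J-K)a$ after using $Jf(z+\beta J)=Ja+J^2b=Ja-b$ and the analogous identity for $K$, which solves for $a=(J-K)^{-1}(Jf(z+\beta J)-Kf(z+\beta K))$. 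Substituting $a+Ib$ and regrouping produces exactly~\eqref{eq:generalrepresentationformula}. For the special case~\eqref{eq:representationformula}, I would set $K=-J$ and simplify: then $J-K=2J$, so $(J-K)^{-1}=-\tfrac{J}{2}$, and a short computation using $J^2=-1$ collapses the general formula into the stated form $\tfrac12(f(z+\beta J)+f(z-\beta J))+I\tfrac{J}{2}(f(z-\beta J)-f(z+\beta J))$, which factors as $\tfrac{1-IJ}{2}f(z+\beta J)+\tfrac{1+IJ}{2}f(z-\beta J)$.

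The main obstacle is the first step — rigorously establishing the global affine dependence $f(z+\beta I)=a+Ib$ on all of $\Omega$, not merely near the mirror. Locally this is immediate from the series expansion and Proposition~\ref{prop:AkandBk}, but extending it globally requires the $(1,3)$-slice-domain hypothesis and the Identity Principle, and one must check that the candidate affine function is genuinely $(1,3)$-regular (so that the Identity Principle applies). Once past this point, everything reduces to inverting a $2\times2$-type linear system over $\hh$, where the only subtlety is keeping track of the noncommutative multiplication and the placement of the factor $(J-K)^{-1}$ on the left.
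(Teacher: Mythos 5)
Your overall strategy --- reduce the theorem to the statement that $f$ is affine along each circle $\s_x$, i.e.\ $f(z+\beta I)=a(z,\beta)+Ib(z,\beta)$, and then solve a $2\times2$ linear system over $\hh$ --- is exactly the skeleton of the paper's proof, and your linear algebra (the placement of $(J-K)^{-1}$ on the left, the invertibility of $J-K$, the specialization $K=-J$) is correct. The gap is in how you establish the affine dependence \emph{globally}. The functions $a,b$ you extract from the series expansion~\eqref{eq:seriesexpansion} and Proposition~\ref{prop:AkandBk} are defined only on a ball $B(z_0,R)$ centred at a point of the mirror; the map $(z,\beta,I)\mapsto a(z,\beta)+Ib(z,\beta)$ is therefore not a function on $\Omega$, and the Identity Principle~\ref{thm:identityprinciple} applied to it can return information only on that ball, where $f$ and the candidate already agree by construction. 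To propagate anything you must first produce a globally defined candidate; the natural one is $g(z+\beta I):=\frac{1-IJ}{2}f(z+\beta J)+\frac{1+IJ}{2}f(z-\beta J)$ for one fixed $J$, which is well defined on all of $\Omega=\Omega_D$ precisely because $\Omega$ is $(1,3)$-symmetric and because the right-hand side is invariant under $(\beta,I)\mapsto(-\beta,-I)$.

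Once $g$ is in place, the substantive content of the theorem is the verification that $g\in\reg_{(1,3)}(\Omega,\hh)$, without which Theorem~\ref{thm:identityprinciple} cannot be invoked. You correctly flag this as ``the main obstacle'' but do not carry it out, and it is not a formality: it is the computation
\[(\debar_Ig_I)(z+\beta I)=\frac{1-IJ}{2}(\debar_Jf_J)(z+\beta J)+\frac{1+IJ}{2}(\debar_{-J}f_{-J})(z-\beta J)\,,\]
which vanishes because $f_J=f_{-J}$ is both $J$-monogenic and $(-J)$-monogenic; this is precisely the ``long computation'' at the heart of the paper's argument. (An alternative that would genuinely bypass this check: for each fixed $I$ the restriction $g_I$ is real analytic on the connected set $\Omega_I$ because $f_J$ is, and it agrees with the real analytic $f_I$ on the nonempty open set $B_I(z_0,R)$ by your local series argument, whence $f_I=g_I$ on all of $\Omega_I$; but that is not the route you describe, since you invoke the Identity Principle for $(1,3)$-regular functions.) As written, the proposal identifies the correct structure and the correct algebra but leaves the one nontrivial analytic step unproved.
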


\begin{proof}
To prove formula~\eqref{eq:representationformula}, we show that the function $g:\Omega\to\hh$ defined as
\begin{align*}
g(z+\beta I)&:=\frac{1-IJ}{2}f_J(z+\beta J)+\frac{1+IJ}{2}f_{-J}(z-\beta J)
\end{align*}
(for $I\in\s^1,z+\beta I\in\Omega$) coincides with $f$. A long computation proves that
\[(\debar_Ig_I)(z+\beta I)=\frac{1-IJ}{2}(\debar_Jf_J)(z+\beta J)+\frac{1+IJ}{2}(\debar_{-J}f_{-J})(z-\beta J)\,.\]
This expression vanishes identically because $f_J=f_{-J}$ is both $J$-monogenic and $(-J)$-monogenic. A direct computation for $I=J$ shows that $g_J=f_J$. By the Identity Principle~\ref{thm:identityprinciple}, $f$ and $g$ coincide throughout $\Omega$.

With formula~\eqref{eq:representationformula} available, we can prove formula~\eqref{eq:generalrepresentationformula} by direct computation. We start with the last expression in formula~\eqref{eq:generalrepresentationformula} and substitute $\frac{1-KJ}{2}f(z+\beta J)+\frac{1+KJ}{2}f(z-\beta J)$ for $f(z+\beta K)$. A long but straightforward computation shows that 
the last expression in formula~\eqref{eq:generalrepresentationformula} equals $\frac{1-IJ}2f(z+\beta J)+\frac{1+IJ}2f(z-\beta J)$, which in turn equals $f(z+\beta I)$ by formula~\eqref{eq:representationformula}.
\end{proof}

We are now ready for the announced result.

\begin{theorem}\label{thm:preslicefunction}
If $\Omega:=\Omega_D$ is a $(1,3)$-slice domain, $\reg_{(1,3)}(\Omega,\hh)=\sr(\Omega,\hh)$. Namely, every $f\in\reg_{(1,3)}(\Omega,\hh)$ is a $(1,3)$-function, i.e, there exist unique real analytic functions $F_\emptyset,F_1:D\to\hh$, respectively even and odd in $\beta$, such that
\begin{equation}\label{eq:preslicefunction}
f(z+\beta J)=F_\emptyset(z,\beta)+JF_1(z,\beta)
\end{equation}
for all $J\in\s^1$ and all $z+\beta J\in\Omega$. In particular, $f$ is real analytic. Moreover,
\begin{align}\label{eq:regularityofprestem}
&(\partial_{x_0}+i\partial_{x_1})F_\emptyset-\partial_\beta F_1=0\,,\\
&\partial_\beta F_\emptyset+(\partial_{x_0}-i\partial_{x_1}) F_1=0\,,\notag
\end{align}
whence $F_\emptyset(x_0+ix_1,\beta),F_1(x_0+ix_1,\beta)$ are harmonic in $x_0,x_1,\beta$. Finally, $f$ is $(1,3)$-slice preserving if, and only if, $F_\emptyset(D)\subseteq\cc$ and $F_1(D)\subseteq\rr+j\rr+k\rr$.
\end{theorem}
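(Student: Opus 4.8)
The plan is to prove Theorem~\ref{thm:preslicefunction} by leveraging the General Representation Formula together with the local series expansion~\eqref{eq:seriesexpansion}, thereby reducing the construction of the stem components $F_\emptyset, F_1$ to a straightforward reading of the representation formula. First I would fix a reference imaginary unit $J \in \s^1$ and rewrite formula~\eqref{eq:representationformula}, which gives $f(z+\beta I)=\frac{1-IJ}{2}f(z+\beta J)+\frac{1+IJ}{2}f(z-\beta J)$, in the canonical stem-function form. Collecting the terms that are free of $I$ and those that carry a factor of $I$, I would set
\[
F_\emptyset(z,\beta):=\tfrac{1}{2}\big(f(z+\beta J)+f(z-\beta J)\big),\qquad
F_1(z,\beta):=\tfrac{J}{2}\big(f(z-\beta J)-f(z+\beta J)\big),
\]
so that $f(z+\beta I)=F_\emptyset(z,\beta)+I F_1(z,\beta)$ for every $I\in\s^1$. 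The parity claims then follow immediately: swapping $\beta\mapsto-\beta$ interchanges the two terms in $F_\emptyset$ (leaving it invariant) and negates $F_1$, so $F_\emptyset$ is even and $F_1$ is odd in $\beta$. Crucially, one must verify that this definition is independent of the choice of $J$; this is exactly the content of the General Representation Formula, since formula~\eqref{eq:generalrepresentationformula} shows that the value $f(z+\beta I)$ is forced for all $I$ once the restriction to a single slice is known, and the parity-split above is just the $K=-J$ specialization. The real analyticity of $F_\emptyset, F_1$ follows because $f_J$ and $f_{-J}$ are $J$-monogenic, hence real analytic on their connected slices by Remark~\ref{rmk:seriesexpansionofJmonogenic}.

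Next I would establish the differential relations~\eqref{eq:regularityofprestem}. The key input is that $f_J$ is $J$-monogenic, i.e. $\debar_J f_J=(\partial_{x_0}+i\partial_{x_1}+J\partial_\beta)f_J=0$ on the slice $\Omega_J$. I would substitute $f_J(z+\beta J)=F_\emptyset(z,\beta)+J F_1(z,\beta)$ into this equation, apply $\partial_{x_0}+i\partial_{x_1}$ and $J\partial_\beta$ to each summand, and then separate the resulting expression into its component lying in $\cc$ (the part with no leading $J$) and the part proportional to $J$. Because $J^2=-1$ and $i,J,iJ$ form a Hamiltonian triple, the term $J\partial_\beta(JF_1)=-\partial_\beta F_1$ contributes to the $\cc$-free part while $(\partial_{x_0}+i\partial_{x_1})(JF_1)$ and $J\partial_\beta F_\emptyset$ contribute to the $J$-part; matching the two independent pieces to zero yields precisely the pair $(\partial_{x_0}+i\partial_{x_1})F_\emptyset-\partial_\beta F_1=0$ and $\partial_\beta F_\emptyset+(\partial_{x_0}-i\partial_{x_1})F_1=0$. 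The harmonicity of the components in the three real variables $x_0,x_1,\beta$ then drops out by applying $\partial_\beta$ to one equation and $(\partial_{x_0}-i\partial_{x_1})$ or $(\partial_{x_0}+i\partial_{x_1})$ to the other and subtracting, which produces $(\partial_{x_0}^2+\partial_{x_1}^2+\partial_\beta^2)F_\emptyset=0$ and similarly for $F_1$; this is just the $(1,3)$-incarnation of Remark~\ref{rmk:J-harmonic}, that $J$-monogenic functions are $J$-harmonic.

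The uniqueness of $F_\emptyset, F_1$ is immediate: evaluating~\eqref{eq:preslicefunction} at a fixed $J$ and at $-J$ and solving the resulting linear system recovers the two definitions above, so any decomposition of the required form must coincide with the one constructed. The membership $f\in\sr(\Omega,\hh)$ then follows by definition, since $f$ has been exhibited as $\I(F)$ for the $(1,3)$-stem function $F=F_\emptyset+E_1F_1$, and it lies in $\reg_{(1,3)}(\Omega,\hh)$ by hypothesis. For the final slice-preserving characterization, I would argue as follows. If $f$ is $(1,3)$-slice preserving, then $f(\rr^{2+1}_J)\subseteq\rr^{2+1}_J$ for every $J$; specializing to the mirror and to the slice gives $F_\emptyset(z,\beta)=\frac{1}{2}(f(z+\beta J)+f(z-\beta J))\in\rr^{2+1}_J$ for all $J$, and since $\cc=\bigcap_{J\in\s^1}\rr^{2+1}_J$ we conclude $F_\emptyset(D)\subseteq\cc$. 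For $F_1$, the factor $J$ already lies in the slice, and requiring $JF_1(z,\beta)\in\rr^{2+1}_J$ for all $J$ while $F_1$ is $J$-independent forces, via the identity $\rr^{2+1}_J J=\rr+j\rr+k\rr$ recorded in Lemma~\ref{lem:PTnabladelta}, that $F_1(D)\subseteq\rr+j\rr+k\rr$. The converse is a direct check that these two containments make $f(z+\beta J)=F_\emptyset+JF_1$ land in $\rr^{2+1}_J$.

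I expect the main obstacle to be the careful bookkeeping in the second paragraph: correctly separating the $J$-monogenicity equation into its two independent quaternionic components requires attention to the noncommutativity of $J$ with $i$ (they anticommute, since $(i,J,iJ)$ is a Hamiltonian triple), so that, for instance, $(\partial_{x_0}+i\partial_{x_1})(JF_1)$ does not simply produce $J(\partial_{x_0}+i\partial_{x_1})F_1$ but rather involves the conjugated operator $(\partial_{x_0}-i\partial_{x_1})$ after moving $J$ past $i$. Getting the signs right here is what yields the asymmetry between the two relations in~\eqref{eq:regularityofprestem} (one with $\partial_{x_0}+i\partial_{x_1}$, the other with $\partial_{x_0}-i\partial_{x_1}$), and it is the only place where a genuine computation is unavoidable.
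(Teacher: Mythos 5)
Your proposal follows essentially the same route as the paper's proof: the stem components $F_\emptyset,F_1$ are obtained by the same $\pm J$ averaging read off from the Representation Formula~\eqref{eq:representationformula}, the relations~\eqref{eq:regularityofprestem} come from expanding $\debar_Jf_J=0$ using $iJ=-Ji$, harmonicity from the factorization of the Laplacian, and the slice-preserving characterization from $\bigcap_{J\in\s^1}\rr^{2+1}_J=\cc$ and $\bigcap_{J\in\s^1}J\rr^{2+1}_J=\rr+j\rr+k\rr$. One imprecision worth fixing: for a \emph{fixed} $J$, an identity $a+Jb=0$ with $a,b$ quaternion-valued does not allow you to ``match the two independent pieces to zero'', since $a$ and $Jb$ are not confined to complementary subspaces of $\hh$ (e.g.\ $a=J$, $b=-1$); you must invoke the fact that the identity holds for every $J\in\s^1$ (use both $J$ and $-J$, or the parity of the two pieces in $\beta$), which is precisely what the paper flags with ``(with $J\in\s^1$ arbitrary)''.
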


\begin{proof}
If a stem function $F:=F_\emptyset+E_1F_1:D\otimes\rr^2\to\hh$ such that $f=\I(F)$ exists, then it is unique because $\I$ is bijective. Let us prove existence. Fix $J\in\s^1$: since $f_J$ is real analytic, setting
\begin{align}\label{eq:F1andF2}
F_\emptyset(z,\beta)&:=\frac{1}{2}(f_J(z+\beta J)+f_J(z-\beta J))\\
F_1(z,\beta)&:=\frac{J}{2}(f_J(z-\beta J)-f_J(z+\beta J))\notag
\end{align}
defines two real analytic functions $F_\emptyset,F_1:D\to\hh$, with $F_\emptyset(z,\beta)$ even in $\beta$ and $F_1(z,\beta)$ odd in $\beta$. By the Representation Formula~\eqref{eq:representationformula}, equality~\eqref{eq:preslicefunction} holds. Now,
\begin{align*}
0&\equiv\debar_Jf_J(z+\beta J)\\
&=(\partial_{x_0}+i\partial_{x_1}+J\partial_\beta)F_\emptyset+(\partial_{x_0}+i\partial_{x_1}+J\partial_\beta)(JF_1)\\
&=(\partial_{x_0}+i\partial_{x_1})F_\emptyset-\partial_\beta F_1+J(\partial_\beta F_\emptyset+(\partial_{x_0}-i\partial_{x_1}) F_1)\,.
\end{align*}
(with $J\in\s^1$ arbitrary) implies equalities~\eqref{eq:regularityofprestem}. For $\Delta=\partial_{x_0}^2+\partial_{x_1}^2+\partial_\beta^2$,
\begin{align*}
\Delta F_\emptyset&=(\partial_{x_0}-i\partial_{x_1})(\partial_{x_0}+i\partial_{x_1})F_\emptyset+\partial_\beta^2F_\emptyset=(\partial_{x_0}-i\partial_{x_1})\partial_\beta F_1-\partial_\beta(\partial_{x_0}-i\partial_{x_1})F_1\,,\\
\Delta F_1&=(\partial_{x_0}+i\partial_{x_1})(\partial_{x_0}-i\partial_{x_1})F_1+\partial_\beta^2F_1=-(\partial_{x_0}+i\partial_{x_1})\partial_\beta F_\emptyset+\partial_\beta(\partial_{x_0}+i\partial_{x_1}) F_\emptyset
\end{align*}both vanish identically.
Finally, taking into account formula~\eqref{eq:F1andF2}, we see that $f$ is $(1,3)$-slice preserving if, and only if, $F_\emptyset$ takes values in $\bigcap_{J\in\s^1}\rr^{2+1}_J=\cc$ and $F_1$ takes values in $\bigcap_{J\in\s^1}J\rr^{2+1}_J=\rr+j\rr+k\rr$.
\end{proof}

We conclude with a relevant example and with a more general remark.

\begin{example}
Let $z_0\in\cc, R>0,B:=B(z_0,R),\{c_\k\}_{\k\in\nn^2}\subset\hh$ be such that
\[f(x)=\sum_{k\in\nn}\sum_{|\k|=k}\T_\k(x-z_0)c_\k\]
converges absolutely and uniformly in $B$, defining a $(1,3)$-regular function $f:B\to\hh$. Referring to the polynomial functions $A_\k,B_\k:\cc\times\rr\to\cc$ of Proposition~\ref{prop:AkandBk}, we have $f(z+\beta J)=A(z,\beta^2)+J\beta B(z,\beta^2)$ with
\begin{align*}
A(z,\gamma)&=\sum_{k\in\nn}\sum_{|\k|=k}A_\k(z-z_0,\gamma)c_\k\,,\\
B(z,\gamma)&=\sum_{k\in\nn}\sum_{|\k|=k}B_\k(z-z_0,\gamma)c_\k\,.
\end{align*}
 Here, absolute and uniform convergence are guaranteed by the inequalities $|A_\k(x_0+ix_1-z_0,x_2^2+x_3^2)|,\ |B_\k(x_0+ix_1-z_0,x_2^2+x_3^2)|\leq|\T_\k(x-z_0)|$, valid for all $x=x_0+ix_1+jx_2+kx_3\in B$.
\end{example}

\begin{remark}\label{rmk:whitney}
With the notations set in Theorem~\ref{thm:preslicefunction}, let $D':=\{(z,\beta^2)\in\cc\times\rr : (z,\beta)\in D\}$. By Whitney's Theorem, see~\cite{whitney}, there exist an open neighborhood $W$ of $D'$ in $\cc\times\rr$, with $W\cap(\cc\times\rr^\geqslant)=D'$, and real analytic functions $A,B:W\to\hh$ such that $F_\emptyset(z,\beta)=A(z,\beta^2)$ and $F_1(z,\beta)=\beta B(z,\beta^2)$, for all $(z,\beta)\in D$.
\end{remark}




\end{document}